\documentclass[12pt,reqno]{article}
\usepackage{amssymb}
\usepackage{amsmath}
\usepackage{amsthm}
\usepackage{amsfonts}
\usepackage{tabularray}
\usepackage{float}
\usepackage{tikz}
\usetikzlibrary{arrows.meta,automata,positioning} 
\usepackage{pgfplots}
\usetikzlibrary{positioning}
\usetikzlibrary{arrows.meta,automata,positioning} 

\definecolor{lightergray}{gray}{.8}
\definecolor{lightestgray}{gray}{.9}

\usepackage[colorlinks=true,
linkcolor=webgreen,
filecolor=webbrown,
citecolor=webgreen]{hyperref}

\definecolor{webgreen}{rgb}{0,.5,0}
\definecolor{webbrown}{rgb}{.6,0,0}
\definecolor{lightergray}{gray}{.8}
\definecolor{lightestgray}{gray}{.9}

\usepackage{fullpage}
\setlength{\textwidth}{6.5in}
\setlength{\oddsidemargin}{.1in}
\setlength{\evensidemargin}{.1in}
\setlength{\topmargin}{-.1in}
\setlength{\textheight}{8.4in}

\title{How Prime Factors Form Fractals}
\author{Micah D. Tillman (mdtillman@stanford.edu)}

\begin{document}
\newcommand{\vpn}[1]{$v_{#1}(n)$}
\newcommand{\svpn}[1]{$v_{#1}\langle\rangle$}
\newcommand{\svpns}[1]{$v_{#1}\{\}$}
\newcommand{\lseq}[1]{$l\langle\rangle_{#1}$}
\newcommand{\emd}{\textemdash ~}
\newcommand{\levy}{L\'{e}vy~}
\newcommand{\oddpart}{$o_n\langle\rangle$}
\newcommand*\circled[1]{\tikz[baseline=(char.base)]{
            \node[shape=circle,draw,inner sep=2pt] (char) {#1};}}
\newcommand{\seqnum}[1]{\href{https://oeis.org/#1}{\rm \underline{#1}}}

\theoremstyle{plain}
\newtheorem{theorem}{Theorem}
\newtheorem{corollary}[theorem]{Corollary}
\newtheorem{lemma}[theorem]{Lemma}
\newtheorem{proposition}[theorem]{Proposition}

\theoremstyle{definition}
\newtheorem{definition}[theorem]{Definition}
\newtheorem{example}[theorem]{Example}
\newtheorem{conjecture}[theorem]{Conjecture}

\maketitle
\begin{abstract}
We explore a new sieve that generates both primes and prime factorizations, without resorting to division. We demonstrate that the integer sequences generated by the sieve are the $p$-adic valuations of $n$, and that each is a fractal sequence. We then show that these sequences produce geometrical fractals like the \levy Dragon. We end by showing the connection between the odd part of $n$ integer sequence and the Heighway Dragon. 
\end{abstract}

\section{Introduction}
There is a beauty in prime factorizations that few ever see. Repeated trial division is not easy to appreciate, but prime factorizations also fail to form an appreciable \textit{whole}. They feel chaotic, unpredictable; the only guarantee is that $n$'s prime factors will differ from $n+1$'s.

The sieve described below eliminates both the division and the disunity. It reveals that fractal patterns underlie the chaos, patterns that are easy to grasp and yet produce complex geometrical figures. In section 2, we see how the sieve works, demonstrating that it generates both primes and prime factorizations. In section 3, we show that the integer sequences it produces \emd the $p$-adic valuations of $n$ \emd are fractal. In section 4, we show that 2's integer sequence encodes the turns in the \levy Dragon. In section 5, we examine several other fractals encoded by $p$-adic sequences and discuss $2^3$ open questions.

\section{The Sieve}
We begin with an empty table. In the table's top row, we place the positive integers up to $m$, where $m$ is as large as we wish. We leave the leftmost cell of the row empty, however, placing a 1 in its second cell, a 2 in its third, a 3 in its fourth, etc., working to the right. 

In what follows, let the number in the topmost cell of each column be the column's \textit{header} and the column whose header is $n$ be \textit{the column for} $n$. With the table set, the sieve begins:

\subsection{Steps of the sieve}
\begin{enumerate}
	\item Find the first column to the right of the column for 1 that has a header, but whose other cells are either empty or contain 0s. If none can be found, stop. Otherwise, we designate column's header, $p$.

	\item Generate an integer sequence as follows:\footnote{Here, we generalize a method proposed by Cloitre \cite{cloitre}.}

	\begin{enumerate}
		\item Create an integer sequence that contains nothing but a single 0. 
		\item Make $p-1$ copies of the integer sequence in its current state.
		\item Append those copies to the original, end-to-end.
		\item Increase the final integer in the resulting sequence by 1.
		\item If the sequence contains fewer than $m$ terms, return to (b). Otherwise, go to step 3.
	\end{enumerate}

	\item Find the topmost empty row in the table. Place $p$ in its leftmost cell. Place the integer sequence from step 2 in the cells to the right of $p$, such that the first integer in the sequence is placed in the cell just to the right of $p$, the second is placed two cells to the right, the third is placed three cells to the right, etc.

	\item Return to step 1.

\end{enumerate}

\subsection{Illustrating the sieve}
By way of illustration, imagine that we start with the following table, where we have limited ourselves to an $m$ of 10:

\begin{table}[H]
\centering
\begin{tblr}{rowspec={|Q[c,lightgray]|Q[c]}, colspec={|Q[c,lightgray]|}}
\SetCell{bg=white}~	&	1	&	2	&	3	&	4	&	5	&	6	&	7	&	8	&	9	&	10\\
					&	 	&	 	&	 	&	 	&	 	&	 	&	 	&	 	&	 	&\\
\end{tblr}
\caption{A partial view of the table for displaying the sieve's results.}
\end{table}

In step 1, we identify the column for 2 as the first (after the column for 1) that has a header but whose other cells are all empty, or contain only 0s. Its header is 2, so $p=2$.

In step 2, we generate an integer sequence for 2 as follows:

\begin{enumerate}
\item[]
\begin{enumerate}
\item We create an integer sequence that contains nothing but a single 0: $\langle0\rangle$
\item We make $p-1$ (i.e., $2-1=1$) copies of the integer sequence in its current state: $\langle0\rangle$
\item We append those copies to the original, end-to-end: $\langle0, 0\rangle$
\item We increase the final integer in the resulting sequence by 1: $\langle0, 1\rangle$
\item Since our integer sequence has fewer than $m=10$ terms, we return to (b).
\item[(b)] We make $p-1$ (i.e., $2-1=1$) copies of the integer sequence in its current state: $\langle0, 1\rangle$
\item[(c)] We append those copies to the original, end-to-end: $\langle0, 1, 0, 1\rangle$
\item[(d)] We increase the final integer in the resulting sequence by 1: $\langle0, 1, 0, 2\rangle$
\item[(e)] Since our integer sequence has fewer than $m=10$ terms, we return to (b) \ldots
\end{enumerate}
\end{enumerate}

In step 3, we would then place $p$ (in this case, 2) in the first cell of the first empty row, and place the integer sequence in the cells to its right.
 
\begin{table}[H]
\centering
\begin{tblr}{rowspec={|Q[c,lightgray]|Q[c]}, colspec={|Q[c,lightgray]|}}
\SetCell{bg=white}	&	1	&	2	&	3	&	4	&	5	&	6	&	7	&	8	&	9	&	10\\
				2	&	0 	&	1 	&	0 	&	2 	&	0 	&	1	&	 0	&	3 	&	0 	&	1\\
\end{tblr}
\caption{Partial table after one iteration.}
\end{table}

Then, we would repeat the process. In step 1, we identify the column for 3 as the first (to the right of the column for 1) that has a header but whose cells are all empty, or contain only 0s. The number in its topmost cell is 3, so $p=3$.

\begin{enumerate}
\item[]
\begin{enumerate}
\item We create an integer sequence that contains nothing but a single 0: $\langle0\rangle$
\item We make $p-1$ (i.e., $3-1=2$) copies of the integer sequence in its current state: $\langle0\rangle\langle0\rangle$
\item We append those copies to the original, end-to-end: $\langle0, 0, 0\rangle$
\item We increase the final integer in the resulting sequence by 1: $\langle0, 0, 1\rangle$
\item Since our sequence has fewer than $m=10$ terms, we return to (b).
\item[(b)] We make $p-1$ (i.e., $3-1=2$) copies of the integer sequence in its current state: $\langle0, 0, 1\rangle\langle0, 0, 1\rangle$
\item[(c)] We append those copies to the original, end-to-end: $\langle0, 0, 1, 0, 0, 1, 0, 0, 1\rangle$
\item[(d)] We increase the final integer in the sequence by 1: $\langle0, 0, 1, 0, 0, 1, 0, 0, 2\rangle$
\item[(e)] Since our sequence still has fewer than $m=10$ terms, we return to (b) \ldots
\end{enumerate}
\end{enumerate}

We would then place $p$ (in this case, 3) in the first cell of the first open row, and place the integer sequence in the cells to its right.
 
\begin{table}[H]
\centering
\begin{tblr}{rowspec={|Q[c,lightgray]|Q[c]|Q[c]}, colspec={|Q[c,lightgray]|}}
\SetCell{bg=white}	&	1	&	2	&	3	&	4	&	5	&	6	&	7	&	8	&	9	&	10\\
				2	&	0 	&	1 	&	0 	&	2 	&	0 	&	1	&	 0	&	3 	&	0 	&	1\\
				3	&	0 	&	0 	&	1 	&	0 	&	0 	&	1 	&	 0	&	0 	&	2 	&	0\\
\end{tblr}
\caption{Partial table after two iterations.}
\end{table}

On the next iteration, we identify the column for 5 as the first (to the right of the column for 1) that has a header but whose cells are all empty, or contain only 0s. The number in its topmost cell is 5. We generate the sequence for 5 and add it to the table.
 
We claim that if we continue the process, the left column of the table will eventually contain every prime $p\le m$, and only those primes. We claim, further, that the columns of the table would \emd when read properly \emd give us the prime factorization of every positive integer $n\le m$ (where $n>1$). We will prove both claims in section 2.4.

A slightly larger segment of the table is provided here, for reference:

\begin{table}[H]
\centering
\begin{tblr}{colsep=4pt, rowspec={|Q[c,lightgray]|Q[c]|Q[c]|Q[c]|Q[c]|Q[c]|Q[c]|Q[c]}, colspec={|Q[c,lightgray]| Q[c,1em] Q[c,1em] Q[c,1em] Q[c,1em] Q[c,1em] Q[c,1em] Q[c,1em] Q[c,1em] Q[c,1em] Q[c] Q[c] Q[c] Q[c] Q[c] Q[c] Q[c]}}
\SetCell{bg=white}	&	1	&	2	&	3	&	4	&	5	&	6	&	7	&	8	&	9	&	10	&	11	&	12	&	13	&	14	&	15	&	16	&	$\cdots$ 	\\
2					&	0	&	1	&	0	&	2	&	0	&	1	&	0	&	3	&	0	&	1	&	0	&	2	&	0	&	1	&	0	&	4	&	$\cdots$ 	\\
3					&	0	&	0	&	1	&	0	&	0	&	1	&	0	&	0	&	2	&	0	&	0	&	1	&	0	&	0	&	1	&	0	&	$\cdots$ 	\\
5					&	0	&	0	&	0	&	0	&	1	&	0	&	0	&	0	&	0	&	1	&	0	&	0	&	0	&	0	&	1	&	0	&	$\cdots$ 	\\
7					&	0	&	0	&	0	&	0	&	0	&	0	&	1	&	0	&	0	&	0	&	0	&	0	&	0	&	1	&	0	&	0	&	$\cdots$ 	\\
11					&	0	&	0	&	0	&	0	&	0	&	0	&	0	&	0	&	0	&	0	&	1	&	0	&	0	&	0	&	0	&	0	&	$\cdots$ 	\\
13					&	0	&	0	&	0	&	0	&	0	&	0	&	0	&	0	&	0	&	0	&	0	&	0	&	1	&	0	&	0	&	0	& 	$\cdots$ 	\\
$\vdots$	&	$\vdots$	&	$\vdots$	&	$\vdots$	&	$\vdots$	&	$\vdots$	&	$\vdots$	&	$\vdots$	&	$\vdots$	&	$\vdots$	&	$\vdots$	&	$\vdots$	&	$\vdots$	&	$\vdots$	&	$\vdots$	&	$\vdots$	&	$\vdots$	&	$\ddots$ 	\\
\end{tblr}
\caption{Partial table after six iterations.}
\end{table}

\subsection{Reading the table}

In what follows, let the number in the leftmost cell of each row be the row's \textit{header}, and the row whose header is $p$ be \textit{the row for }$p$.

To find a given positive integer's prime factorization, locate the column for that number in the table. Then, identify all cells in the same column, below its header, that contain a number greater than 0. For each such cell, identify the header of the row to which it belongs, and raise that header to the number contained within the cell. The prime factorization of the column's header will be the product of those row headers raised to those numbers.

To find the prime factorization of 24, for example, we first locate its column. 
\begin{table}[H]
\centering
\begin{tblr}{rowspec={|Q[c,lightgray]|Q[c]|Q[c]|Q[c]|Q[c]|Q[c]}, colspec={|Q[c,lightgray]}}
\SetCell{bg=white}	&	$\cdots$		&	24	&	$\cdots$ 	\\
2					&	$\cdots$		&	3	&	$\cdots$ 	\\
3					&	$\cdots$		&	1	&	$\cdots$ 	\\
5					&	$\cdots$		&	0	&	$\cdots$ 	\\
7					&	$\cdots$		&	0	&	$\cdots$ 	\\
$\vdots$	&	$\vdots$	&	$\vdots$	&	$\ddots$ 	\\
\end{tblr}
\caption{Reading the prime factorization of 24.}
\end{table}
\noindent
We then see that only the second and third cells in the column contain a number larger than 0. There is a 3 in the row for 2, and a 1 in the row for 3. Thus, we raise 2 to the third power and 3 to the first power, then multiply the two together: $24=2^3\times3^1$.

\subsection{Does the sieve work?}
Now that we know how to run the sieve, we need to know how confident we should be that it does what it promises. Does it identify all and only the primes? Does it provide the complete prime factorization of every positive integer $>1$?

\begin{proposition}
If steps 2 and 3 in the sieve eliminate as composite all and only the multiples of each number that the sieve identifies as prime, then the sieve identifies all and only the primes as prime.  
\end{proposition}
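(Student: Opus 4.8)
The plan is to argue by induction on the iterations of the sieve that the numbers it places as row headers are exactly the primes $\le m$, discovered in strictly increasing order. First I would use the proposition's hypothesis to reinterpret the test in step 1. By assumption, running steps 2 and 3 on an identified prime $p$ places a nonzero entry in column $n$ precisely when $n$ is a multiple of $p$; since marks are only ever added and never erased, after some collection of primes has been processed a column $n$ fails the step-1 test (i.e.\ contains a nonzero entry below its header) if and only if $n$ is a multiple of at least one already-identified prime. In particular each prime marks its own column, so no header is ever selected twice, and because every column smaller than the current selection is already marked, the leftmost qualifying column strictly increases from one iteration to the next. This is what licenses an induction ``in order,'' and it also records that column 1 is permanently excluded (step 1 only ever looks to the right of it), which is what we want since $1$ is not prime.

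For the inductive step, suppose that at some stage the headers chosen so far are exactly the primes up to and including the largest selected header $q$, and let $n>1$ be the next column selected, i.e.\ the smallest integer exceeding $1$ that is not a multiple of any prime $\le q$. I would show two things. (i) $n$ is prime: if $n$ were composite it would have a prime factor $r$ with $r \le \sqrt{n} < n$; since $n$ is divisible by no prime $\le q$ we must have $r > q$, but then $q < r < n$ with $r$ prime and divisible by no prime $\le q$, so $r$ itself would have qualified in step 1 before $n$, contradicting minimality. (ii) $n$ is the very next prime after $q$, with none skipped: any prime $s$ with $q < s < n$ is itself divisible by no prime $\le q$ and so would have qualified before $n$, again contradicting minimality. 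Together these give that the selected headers continue to be precisely the primes, in order.

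Finally I would address termination and completeness. The sieve halts exactly when no column passes the step-1 test, and I claim this happens precisely once every prime $\le m$ has been placed: at that moment every remaining column $n$ with $1 < n \le m$ is composite, hence has a smallest prime factor $\le \sqrt{n} \le \sqrt{m}$, which is among the primes already found and therefore marks column $n$. Conversely, by part (i) the sieve never selects a composite and never selects $1$, so at halting the set of headers is exactly the set of primes $\le m$.

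The main obstacle is step (i) of the induction\emd showing that the smallest unmarked column must itself be prime. Everything else is bookkeeping built on the hypothesis, but (i) is where the arithmetic actually enters: it rests on the fact that a composite number always has a prime factor strictly below it (indeed at most its square root), combined with the inductive guarantee that all smaller primes have already been found and hence have already marked their multiples. I would make sure the square-root bound is stated cleanly, since it is what forces the hypothetical factor $r$ to lie strictly between $q$ and $n$ and thereby produces the contradiction.
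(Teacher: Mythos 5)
Your proposal is correct and takes essentially the same route as the paper's proof: under the marking hypothesis the sieve behaves exactly like the Sieve of Eratosthenes, so the leftmost unmarked column must be the next prime. The only difference is one of rigor, not of approach\textemdash{} where the paper informally asserts that an unmarked column ``is not a multiple of the positive integers less than it'' and hence prime, you supply the supporting details (the explicit induction, the fact that a composite has a prime factor strictly below it, the no-skipped-primes step, and the halting and completeness analysis), all of which are sound.
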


\begin{proof}
Our sieve follows Eratosthenes in its manner of choosing the next number to designate as prime: it finds the first positive integer greater than 1 that has yet to be eliminated as composite or designated as prime. The first such number is 2. It then designates the number it has just identified as prime by placing it in a cell in the first column (rather than by circling it, for example). Finally, as we will see below, it eliminates all and only the multiples of that prime by placing an integer $>0$ into a cell in every column corresponding to a multiple thereof (rather than by crossing off every multiple, for example).

Assuming all of this is correct, if \emd as we work from left to right, skipping the column for 1 \emd we arrive at a column that contains no numbers $>1$ below its header, we have arrived at a number that is not a multiple of the positive integers less than it (1 excepted). But that means the number must be prime, so the sieve will designate it as such and eliminate all its multiples. 

Therefore, \textit{if} the algorithm for generating integer sequences actually eliminates as composite all and only the multiples of each prime that the sieve identifies, then the sieve identifies all and only primes as prime. 
\end{proof}

\begin{proposition}
Steps 2 and 3 in the sieve eliminate as composite all and only the multiples of the numbers that the sieve identifies as prime.
\end{proposition}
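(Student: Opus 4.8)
The plan is to pin down exactly what the $n$-th term of the sequence generated for a number $p$ in step 2 is, and then read off the marking pattern. The tables strongly suggest (and I would prove) that this term equals
\[
w_p(n) := \max\{e \ge 0 : p^e \mid n\},
\]
the number of factors of $p$ in $n$. Once that identity is established the proposition is immediate: by step 3 the cell in column $n$ of the row for $p$ carries $w_p(n)$, and $w_p(n) > 0$ exactly when $p \mid n$, so the columns receiving a value $>0$ are precisely $\{p, 2p, 3p, \ldots\}$ \emd all and only the multiples of $p$. Notice this marking claim never uses primality of $p$; primality of the designated numbers is the content of the preceding proposition, and here I only need the marking pattern for whatever number the sieve has placed.

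First I would set up the recursion. Let $S_0 = \langle 0 \rangle$ be the single-$0$ sequence of step (a), and for $k \ge 1$ let $S_k$ be the result of one pass through (b)--(d): concatenate $p$ copies of $S_{k-1}$ and add $1$ to the final term. Then $S_k$ has length $p^k$; write $a_k(n)$ for its $n$-th term ($1 \le n \le p^k$). The sieve halts using $S_K$ for the least $K$ with $p^K \ge m$, so it suffices to prove, for every $k \ge 0$,
\[
a_k(n) = w_p(n) \qquad (1 \le n \le p^k),
\]
by induction on $k$, the base case $k=0$ being $a_0(1) = 0 = w_p(1)$.

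For the inductive step I would locate position $n$ inside the $p$-fold concatenation: write $n = (j-1)p^{k-1} + r$ with $1 \le j \le p$ and $1 \le r \le p^{k-1}$, so $n$ is the $r$-th entry of the $j$-th copy of $S_{k-1}$. Then $a_k(n) = a_{k-1}(r) = w_p(r)$ by the inductive hypothesis, except that at the global last position $n = p^k$ an extra $1$ has been added, giving $a_k(p^k) = a_{k-1}(p^{k-1}) + 1 = k$. I would then check $w_p(n)$ against this in three regimes: (i) $r < p^{k-1}$, where $w_p(r) \le k-2$ forces $p^{w_p(r)+1} \mid (j-1)p^{k-1}$ while $p^{w_p(r)+1} \nmid r$, so $w_p(n) = w_p(r)$; (ii) $r = p^{k-1}$ with $j < p$, where $n = jp^{k-1}$ and $p \nmid j$ yield $w_p(n) = k-1 = w_p(p^{k-1})$; and (iii) $r = p^{k-1}$ with $j = p$, where $n = p^k$ and $w_p(n) = k$ matches the incremented value.

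The main obstacle is the bookkeeping in case (i): I must confirm that adding the multiple $(j-1)p^{k-1}$ of $p^{k-1}$ to a number $r$ carrying strictly fewer than $k-1$ factors of $p$ leaves the factor count unchanged, which is exactly where the bound $w_p(r) \le k-2$ does the work. The second delicate point is tracking the increments: only the single global final term is raised on each pass, so these accumulate to precisely $k$ at position $p^k$ and disturb no other position \emd this is what keeps case (ii), where $j<p$ so the copy's last term is not the global last term, consistent with the hypothesis. Once the displayed identity is proved the proposition, and indeed the stronger $p$-adic statement foreshadowed for section 3, follows at once.
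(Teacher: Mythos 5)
Your proof is correct, and it takes a genuinely different (and stronger) route than the paper's. The paper's own proof of this proposition never computes the values of the terms at all: it only tracks the \emph{support} of the sequence. It observes that the first pass of step 2 yields $p-1$ zeros followed by a 1 at index $p$, and that each later pass merely concatenates copies of the current sequence and increments the final term, so every $p$-th position stays positive while every position whose index is not a multiple of $p$ stays 0; hence exactly the columns with headers $kp$ receive an entry $>0$. You instead prove, by induction on the number of passes, the full termwise identity $a_k(n)=w_p(n)$ for $1\le n\le p^k$, and then read the proposition off the special case that $w_p(n)>0$ if and only if $p\mid n$. Your induction is sound: the decomposition $n=(j-1)p^{k-1}+r$, the bound $w_p(r)\le k-2$ in case (i) (which follows since $w_p(r)=k-1$ would force $r\ge p^{k-1}$), the fact that $1\le j\le p-1$ gives $p\nmid j$ in case (ii), and the accounting that only the single global last term is incremented on each pass all hold \emph{without} assuming $p$ prime, exactly as you note --- which matters, since at this stage of the paper's argument primality of the designated number has not yet been established. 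What your route buys is economy downstream: the identity $a_k(n)=w_p(n)$ is precisely the content of the paper's Proposition 4, which the paper proves separately and more informally later, so your single induction discharges both results at once and with more rigor than either of the paper's narrative arguments. What the paper's route buys is modularity and accessibility: it isolates the weak fact actually needed here (nonzero entries occur exactly at multiples of $p$) from the stronger valuation statement, keeping each step short for its intended audience. One small point worth making explicit in your write-up: the sieve stops once the sequence has at least $m$ terms, so the table row is an initial segment of $S_K$ for the least $K$ with $p^K\ge m$; since your identity holds termwise, this truncation is harmless.
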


\begin{proof}
Every sequence in step 2 of the sieve starts as a single 0, which is expanded in the first iteration to become a sequence of $p-1$ consecutive 0s, followed by a 1 at index $p$. Future iterations will expand the sequence only by making copies of what they are given and appending those copies end-to-end. So, the second iteration will generate a sequence consisting of $p$ subsequences, each of whose $p$th term is a 1. It will then increment the final of those 1s, converting it into a 2. 

The same basic thing will happen again with each following iteration. Copies of a sequence in which every $p$th term is $>0$ will be made, all copies will be concatenated together with the original, and the last of those terms will be made even larger than it was before. Meanwhile, all the terms whose indexes are not a multiple of $p$ will remain 0s.

That sequence of integers will then be placed into the table, with the $n$th term in the sequence being placed in the column for $n$. Since every $p$th term in the sequence will be $>0$, every $p$th number in the table's first row will end up with a number $>0$ placed below it. This will mark the headers of the form $h=k\times p$ (where $k$ is a positive integer) \emd all multiples of $p$ \emd as composite, but will not mark any others as composite. Therefore, all and only the multiples of the primes will be identified as composite.
\end{proof}

\begin{lemma}
The sieve identifies all and only the primes as prime.
\end{lemma}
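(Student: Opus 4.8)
The plan is to derive the Lemma as an immediate consequence of the two propositions that precede it, by a single application of modus ponens. The first proposition supplies a conditional: \emph{if} steps 2 and 3 of the sieve eliminate as composite all and only the multiples of each number the sieve designates as prime, \emph{then} the sieve identifies all and only the primes as prime. The second proposition supplies exactly the antecedent of that conditional, establishing that steps 2 and 3 do in fact eliminate as composite all and only the multiples of the numbers the sieve designates as prime. Since both propositions may be assumed, the consequent follows directly, and that consequent is verbatim the statement of the Lemma.

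Concretely, I would first recall the first proposition and observe that its hypothesis is a single, well-defined claim about the behavior of steps 2 and 3. I would then cite the second proposition as having discharged precisely that hypothesis. Combining the conditional with its now-verified antecedent yields the Lemma with no further work. In particular, no separate soundness-and-completeness argument is needed here: the two directions implicit in the phrase \emph{all and only} — that every prime is identified as prime, and that nothing composite is — are already packaged into the two propositions, so there is nothing additional to check.

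I do not expect any genuine obstacle at this stage, since the substantive labor has already been carried out upstream. All the real content lives in the second proposition, whose proof tracks the recursive construction of the integer sequences in step 2 and shows, in effect by induction on the iterations of that step, that a term is positive exactly at the indices divisible by $p$. Taking that result as given, the present Lemma is a purely logical corollary; the only care required is to lay out the chain of implication cleanly, so that the reader sees the Lemma is not a fresh claim but simply the conjunction of what the two propositions have already established.
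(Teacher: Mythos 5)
Your proposal matches the paper's proof exactly: the paper likewise derives the Lemma from Propositions 1 and 2 by a single application of \emph{modus ponens}, with all substantive work deferred to the preceding propositions. Correct, and essentially identical in approach.
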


\begin{proof}
This follows from Propositions 1 and 2, by \textit{modus ponens}.
\end{proof}

So, our sieve sieves. But it is also supposed to generate prime factorizations. If it is going to do that, it will need to do more than just putting \textit{some} number $>0$ in the right columns. It will need to put the \textit{right} number $>0$ in the right columns.

\begin{proposition}
If $n$'s prime factorization contains the multiplicand $p^x$, step 3 in the sieve places $x$ in the table's column for $n$ and row for $p$. Otherwise it places a 0 in the table's column for $n$ and row for $p$. 
\end{proposition}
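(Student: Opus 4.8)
The plan is to show that the value step~3 deposits in row~$p$, column~$n$ is exactly the $p$-adic valuation $v_p(n)$, i.e.\ the exponent of $p$ in the prime factorization of $n$. Since step~3 places the $n$th term of the sequence built in step~2 into column~$n$, and since the fundamental theorem of arithmetic tells us that the multiplicand $p^x$ occurs in $n$'s factorization precisely when $v_p(n)=x$ (with $v_p(n)=0$ exactly when $p\nmid n$), the whole proposition reduces to a single claim about the generated sequence: \emph{its $n$th term equals $v_p(n)$.}

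I would prove this claim by induction on the iteration count $k$ of the loop (b)--(e) in step~2, carrying the invariant that after $k$ iterations the sequence has length $p^k$ and its $n$th term equals $v_p(n)$ for every $1\le n\le p^k$. The base case is immediate: before any iteration the sequence is $\langle 0\rangle$, of length $p^0=1$, and its single term $0$ equals $v_p(1)$.

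For the inductive step I would decompose the index. After concatenating the $p$ copies, each index $1\le n\le p^{k+1}$ can be written uniquely as $n=j\,p^k+i$ with $0\le j\le p-1$ and $1\le i\le p^k$, and the value sitting at position $n$ \emph{before} the increment is the $i$th term of the previous sequence, which the induction hypothesis identifies as $v_p(i)$. The heart of the argument is to compare $v_p(i)$ with $v_p(n)$. When $1\le i<p^k$ we have $p^k\nmid i$, so $v_p(i)\le k-1<k\le v_p(j\,p^k)$; the non-archimedean (ultrametric) property of the valuation then forces $v_p(n)=v_p(j\,p^k+i)=v_p(i)$, and since such $n$ satisfies $n<p^{k+1}$ it is untouched by the increment. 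When $i=p^k$ we have $n=(j+1)p^k$, the copied value is $v_p(p^k)=k$, and $v_p(n)=k+v_p(j+1)$; for $j<p-1$ we get $v_p(j+1)=0$ so $k$ is already correct, while for $j=p-1$ we have $n=p^{k+1}$ with $v_p(n)=k+1$, which is exactly what the single increment in (d) supplies. This closes the induction, and the proposition then follows from the reduction of the first paragraph.

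The main obstacle is precisely this boundary bookkeeping in the inductive step: one must see that the naive concatenation of copies already reproduces $v_p$ everywhere except at the single terminal index $p^{k+1}$, where it falls short by exactly one, and that (d)'s lone increment repairs exactly that discrepancy and nothing else. The conceptual reason this works is that the valuation sequence is self-similar---a block of length $p^k$ recurs verbatim except that its final entry climbs by one whenever the block index acquires a higher power of $p$---and the copy-and-increment rule is engineered to mirror that recursion.
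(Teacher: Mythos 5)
Your proof is correct, and at bottom it rests on the same self-similarity that drives the paper's own argument: concatenating $p$ copies reproduces the valuation block verbatim, and the lone increment at the terminal index supplies exactly the extra power of $p$ needed at $p^{k+1}$. The organization, however, is genuinely different. The paper argues \emph{value-centrically}: it tracks the number $j$ born at index $p^j$ and observes that the next copy step deposits duplicates of $j$ at indexes $o\times p^j$ for $2\le o\le p$, where $v_p(o\times p^j)=j$ still holds because $o<p$ cannot contribute a factor of $p$ (and the copy at $p\times p^j=p^{j+1}$ is precisely the one the increment turns into $j+1$); coverage of cofactors $o>p$ is left to the informal remark that the same thing happens in later iterations. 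You instead argue \emph{position-centrically}, by induction on the iteration count with the explicit invariant that after $k$ iterations the sequence has length $p^k$ and its $n$th term equals $v_p(n)$ for all $n\le p^k$, verified through the decomposition $n=jp^k+i$ and the ultrametric inequality forcing $v_p(jp^k+i)=v_p(i)$ when $v_p(i)<k$ --- a lemma the paper never needs to invoke because it only ever compares a copied $j$ against small multipliers. What your route buys is uniform rigor: every index of the new block is settled in one case analysis, which closes the gap the paper glosses over for cofactors larger than $p$, and your opening paragraph makes explicit the reduction of the proposition's two cases (multiplicand $p^x$ present versus absent) to the single identity ``$n$th term $=v_p(n)$,'' which the paper treats as understood. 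One microscopic repair: in your case $1\le i<p^k$ with $j=0$, the quantity $v_p(jp^k)$ is $v_p(0)$, so either adopt the convention $v_p(0)=\infty$ or simply note that $n=i$ makes that subcase trivial; nothing else needs fixing.
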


\begin{proof}
Because step 2 in the sieve starts with a sequence of length 1, and then multiplies the length of the sequence by $p$ each time, the index of the term it increments will always be $p^j$, where $j$ is the number of the current iteration. After the incrementing step in a given iteration, furthermore, the $p^j$th term will always be $j$. But the $p^j$th term is supposed to represent the exponent of $p$ in the prime factorization of $p^j$, which just \textit{is} $j$. Therefore, $j$ is exactly what we want at index $p^j$.

\begin{table}[H]
\centering
\begin{tblr}{rowspec={|Q[c]|Q[c]|Q[c]|}, colsep=5pt, colspec={|Q[r]|}}
 $n$: & 1 & 2 & 3 & 4 & 5 & 6 & 7 & 8 & 9 & 10 & $\cdots$ 	\\
Fact.: & n/a 	&	$2^1$ 	&	$3^{\circled{\scriptsize 1}}$	&	$2^2$ 	&	$5^1$	&	$2^1\times3^{\circled{\scriptsize 1}}$	&	$7^1$	&	$2^3$	&	$3^{\circled{\scriptsize 2}}$	&	$2^1\times5^1$ 	&	$\cdots$ 	\\
Seq.: &	0	& 0 & \circled{1} & 0	& 0 &	\circled{1} & 0	& 0 &	\circled{2}	&	0	&	$\cdots$ 	\\
\end{tblr}
\caption{How $j$ (from $3^j$ in prime factorizations) appears in the integer sequence for 3.}
\end{table}

So far, so good. But when we create copies of the sequence during the next iteration, we end up placing $p-1$ copies of $j$ at \textit{later} indexes as well.

\begin{table}[H]
\centering
\begin{tblr}{rowspec={|Q[c]|Q[c]|Q[c]|Q[c]|}, colspec={|Q[r]|Q[l]|}}
Start: 		& 0\\
Duplicate:	& 0, 0, 0\\	
Increment:	& 0, 0, \circled{1}\\
Duplicate:	& 0, 0, 1, 0, 0, \circled{1}, 0, 0, \circled{1}\\
\end{tblr}
\caption{How $j$ is copied by later terms, when $p=3$.}
\end{table}

Those additional $j$ terms will fall at the ends of the copies of the sequence, each of which has the same length as the current sequence. This means the first additional $j$ will fall at index $2\times p^j$, the second will fall at index $3\times p^j$, etc., up to the $(p-1)$th additional copy of $j$ falling at index $p\times p^j$. The term at index $o\times p^j$, where $o<p$ is a positive integer, is supposed to represent the exponent of $p$ in the prime factorization of the number that just \textit{is} $o\times p^j$. But the exponent of $p$ in that prime factorization will just be $j$ again. Therefore, $j$ is exactly the number we want at that index.

Things are different for the term at index $p\times p^j$, however. Since $p\times p^j=p^{j+1}$, we want $p+1$ at that index. But that is exactly what we will get, since that is the term we will increment at the end of the iteration. So, if we want to end up with $j+1$ after the incrementing step, we need to place a $j$ there first. This is what step 2 of the sieve does.

Therefore, step 2 of the sieve will place an $x$ at the $n$th index in the integer sequence for $p$ if $n$'s prime factorization contains the multiplicand $p^x$. If $n$'s prime factorization does not contain a multiplicand whose base is $p$, step 2 will place a 0 at index $n$ instead. 

Finally, step 3 will place the integer sequence in the row for $p$, with the term at index $n$ being in the column for $n$. Therefore, if $n$'s prime factorization contains the multiplicand $p^x$, step 3 in the sieve places $x$ in the table's column for $n$ and row for $p$. Otherwise it places a 0 in the table's column for $n$ and row for $p$. 
\end{proof}

\begin{lemma}
The sieve provides the prime factorization of every integer $>1$.
\end{lemma}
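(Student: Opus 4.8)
The plan is to assemble the two results already proved — the Lemma that the row headers are exactly the primes, and Proposition 3 that each cell records the correct exponent — and then invoke the Fundamental Theorem of Arithmetic to conclude that the reading procedure of Section 2.3 reconstructs $n$ exactly. In other words, I would show that the product formed from the column for $n$ is forced to equal the unique prime factorization of $n$.

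First I would fix an integer $n$ with $1 < n \le m$ and check that the column for $n$ is fully populated in every prime row. Any prime $p$ dividing $n$ satisfies $p \le n \le m$, so by the earlier Lemma it heads a row; and because step 2 generates each sequence until it has at least $m$ terms, that row carries an entry in column $n$. Thus no prime factor of $n$ can be missing from the table, and the column for $n$ has a well-defined entry in every prime row.

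Next I would read off the entries using Proposition 3. For each prime $p$, the cell in row $p$ and column $n$ equals the exponent $x$ with $p^x \parallel n$ — that is, the $p$-adic valuation $v_p(n)$ — and equals $0$ precisely when $p \nmid n$. Hence the cells with entry $>0$ in column $n$ are exactly those whose row header is a prime dividing $n$, and each such cell stores the matching exponent. Raising each such header to its entry and multiplying, as prescribed in Section 2.3, therefore yields
\[
\prod_{p \mid n} p^{v_p(n)} = n,
\]
by the Fundamental Theorem of Arithmetic. The same theorem guarantees that no other product of primes equals $n$, so the factorization read from the table is not only correct but complete.

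I expect the main obstacle to be purely one of bookkeeping rather than genuine difficulty: I must be sure the table is simultaneously \emph{complete} (every prime factor of $n$ appears, with its row filled out to column $n$) and \emph{sound} (no prime that fails to divide $n$ contributes, i.e., its cell reads $0$). Completeness rests on the width guarantee from step 2 together with the bound $p \le n \le m$, while soundness is exactly the ``otherwise it places a 0'' half of Proposition 3. Once both are in hand, the Fundamental Theorem of Arithmetic does the remaining work, and the Lemma follows.
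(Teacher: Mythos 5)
Your proposal is correct and takes essentially the same route as the paper: both deduce the lemma by combining the exponent-correctness result (the paper's Proposition 4, which you cite as Proposition 3) with the reading procedure of Section 2.3. Your write-up is merely more explicit than the paper's, which leaves implicit the completeness check (that $p \mid n$ and $p \le n \le m$ guarantee a populated row for $p$) and the appeal to the Fundamental Theorem of Arithmetic that you spell out.
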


\begin{proof}
Each prime by which a positive integer $n$ is divisible will appear as the base in one of the multiplicands in the prime factorization of $n$. Each of those primes, furthermore, will be raised to some positive integer in $n$'s prime factorization. By Proposition 4, if $n$'s prime factorization contains the multiplicand $p^x$, step 3 in the sieve places $x$ in the table's column for $n$ and row for $p$. From this, it is possible to determine which primes are the bases of multiplicands in the prime factorization of $n$ and what the exponents of those primes are. Therefore, the sieve tells us what all the components of $n$'s prime factorization are, and how to combine them. Therefore, the sieve provides the prime factorization of every integer $>1$.
\end{proof}

\begin{theorem}
The sieve identifies all and only primes as prime and provides the prime factorization of every integer $>1$.
\end{theorem}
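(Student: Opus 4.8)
The plan is to recognize that this theorem is simply the conjunction of the two lemmas already established in this section, so almost no new work will be required. The statement has two conjuncts: first, that the sieve identifies all and only primes as prime; second, that it provides the prime factorization of every integer greater than 1. Each conjunct is precisely the assertion of one of the preceding lemmas (the lemma following Proposition 2 for the first, and the lemma following Proposition 4 for the second). So the proof reduces to citing both lemmas and taking their conjunction.

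Concretely, I would first invoke the lemma that the sieve identifies all and only the primes as prime, which itself rests on Propositions 1 and 2 via \emph{modus ponens}. This handles the primality-detection half of the claim outright. Second, I would invoke the lemma that the sieve provides the prime factorization of every integer greater than 1, which follows from Proposition 4 together with the uniqueness of prime factorization. Since both lemmas have already been discharged, the theorem follows immediately by conjunction introduction.

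There is essentially no obstacle here, and the only thing to be careful about is logical bookkeeping: I want to make sure I am not silently assuming that ``identifies primes correctly'' and ``factorizes correctly'' depend on one another in a circular way. They do not, because each lemma was proved from its own propositions (Propositions 1--2 for primality, Proposition 4 for factorization), and those propositions were established independently from the mechanics of step 2 and step 3 of the sieve. I would therefore present the argument as a clean two-line deduction, noting that the substantive content of the theorem lives entirely in the earlier propositions and lemmas, and that the theorem itself serves mainly to package those results into a single summary statement.

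\begin{proof}
By the lemma established from Propositions 1 and 2, the sieve identifies all and only the primes as prime. By the lemma established from Proposition 4, the sieve provides the prime factorization of every integer greater than 1. The theorem is the conjunction of these two lemmas, and therefore holds.
\end{proof}
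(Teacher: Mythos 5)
Your proposal is correct and matches the paper's own proof exactly: the theorem is deduced by conjunction from the two preceding lemmas (Lemma 3 on primality detection and Lemma 5 on factorization), with all substantive content residing in Propositions 1--4. Your remark about checking for circularity between the two conjuncts is sound and consistent with how the paper structures the dependencies.
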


\begin{proof}
This follows from Lemmas 3 and 5, by conjunction.
\end{proof}

\subsection{Discussion}
With this, the first half of our task is complete. We can obtain both primes and prime factorizations without trial division. It would be reasonable to ask, however, whether the gain was worth the cost. The young math student who learns this sieve may avoid their greatest nemesis \emd long division \emd but in place of that drudgery we give them the tedium of repeated copying-and-pasting.

There is a difference between these two sorts of toil, however. Until the young mathematician is allowed the use of a calculator, division requires actual \textit{thought}. Our sieve, in contrast, requires only mechanical counting.  

The sieve is more than a labor-saving device, however. In removing the obstacle of calculation, it reveals the fractal structures underlying the natural numbers. It will be our task in the next section to demonstrate this.

\section{Fractal Sequences}

The integer sequences described above are not new. The sequence for 2 is the ``2-adic valuation of $n$'' \emd sequence \seqnum{A007814} in the OEIS \cite{oeis}. Three's sequence, furthermore, is the ``3-adic valuation of $n$'' (\seqnum{A007949}), and 5's is the ``5-adic valuation of $n$'' (\seqnum{A112765}). There is significant danger of terminological confusion here, however, so it will behoove us to be annoyingly precise. 

\begin{definition} Let \vpn{p}, \svpn{p}, and \svpns{p} be defined as follows:
\begin{itemize}
\item \vpn{p} (``the $p$-adic valuation of $n$''): a function that returns the largest integer $k$ (for some prime $p$ and positive integer $n$) such that $p^k$ divides $n$. 
\item \svpn{p} (``the $p$-adic valuation sequence''): the integer sequence in which the $i$th term is the the number returned by $v_p(n_i)$.
\item \svpns{p} (``the $p$-adic valuation set''): the set of all \svpn{p} sequences. 
\end{itemize}

\end{definition}

\noindent The function \vpn{p} plays an important role in $p$-adic number systems, in that it is used to define $p$-adic \textit{absolute} values.  Our focus here, however, will be on the fact that it generates fractals.

\subsection{Requirements for being fractal}

The 2-adic, 3-adic, and 5-adic valuation sequences are just three among the many given fractal-associated labels in the OEIS. To my knowledge, however, no one has provided a unified account of the fractal nature of all the $p$-adic valuation sequences, or offered a \textit{demonstration} that they are all fractal. 

To begin, we need a definition of ``fractal sequence.''

\begin{definition} 
A sequence is fractal if it is predictably self-containing and aperiodic.
\end{definition}

That fractals tend to contain copies of themselves, and thus tend to be ``self-containing,'' is a commonplace. What it would mean for an integer sequence to contain itself may not be obvious at first, however, and what it would mean for a sequence to be ``predictably'' self-containing, even more so. Following Kimberling \cite[p.\ 2]{kimberling22},\footnote{Kimberling \cite{kimberling95} may have offered the earliest definition of fractal sequences, as well as the first distinction between self-containing and fractal sequences \cite{kimberling97}, though his most recent definition of fractal sequences \cite[p.\ 4]{kimberling22} differs from the one with which we will work here.} we will work with the following definition of self-containment:

\begin{definition}
A sequence, $a_n$, is \textit{self-containing} if it has a proper subsequence, $a_{n_i}$, such that
$a_{n_i}=a_i \text{~for all~} i \in \mathbb{N} = \{1, 2, 3,\ldots\}.$
\end{definition}

For a sequence to be self-containing in the structured way we expect of a fractal, however, we need a reason for selecting $a_j$ to serve as the $i$th term in $a_{n_i}$, other than the fact that $j>i$ and $a_j$ happens to equal $a_i$. If we just scan through the sequence till we find the next term that will work for our subsequence, we will have shown that we can impose a pattern on our sequence, but not that we have discovered a genuine pattern therein. What we want, then, is a function, algorithm, or rule that will tell us \textit{where} to find the next term for our subsequence, not a \textit{post hoc} criterion to tell us \textit{that} we've found a term that will work once we get to it.

To capture this requirement, we will limit ourselves to ``predictably'' self-containing sequences, defined as follows:

\begin{definition}
A sequence, $a_n$, is \textit{predictably self-containing} if it is self-containing and there is at least one function or algorithm that enables us to determine (``predict'') which term in $a_n$ should be selected to serve as the $i$th term of $a_n$'s identical subsequence without requiring us to know the value of $a_i$.
\end{definition}

Now for aperiodicity. Talk of ``fractals'' becomes useful only when we are confronted with irregularity or roughness that ``zooming in'' does not smooth out (see Mandelbrot \cite[p.\ 1]{mandelbrot} and Stewart \cite[p.\ 4--5]{stewart}). There are self-containing figures and sequences, however, that are regular from the start. The example that comes most readily to mind is the straight line (see \levy \cite[p.\ 186]{levy}), but infinitely-long periodic sequences like $\langle 0,$ 0, 0, 0, 0, 0, 0, 0, $\ldots\rangle$ or $\langle 0,$ 1, 2, 3, 0, 1, 2, 3, $\ldots\rangle$ are much the same. 

To be justified in calling a sequence ``fractal,'' then, we will need evidence that it is aperiodic in the following sense:

\begin{definition}
Let ``$x_n + y_n$'' denote the concatenation of sequences $x_n$ and $y_n$, such that $y_n$ is appended to $x_n$ to form a longer sequence. We say a sequence, $a_n$, is \textit{aperiodic} if there is no shorter sequence, $b_n$, such that $a_n = b_n + b_n + b_n + \cdots$. 
\end{definition} 

So, to show that a sequence is fractal we need three things: 

\begin{enumerate}
\item a way to locate (pick out, identify) within $a_n$ the $i$th term of the subsequence, $a_{n_i}$, without making reference to the value of $a_i$, 
\item proof that the subsequence, $a_{n_i}$, is identical with $a_n$, and 
\item proof that there is no sequence $b_n$, shorter than $a_n$, such that $a_n$ = $b_n + b_n + b_n + \cdots$.
\end{enumerate}

\subsection{The subsequence location requirement}
Contributors to the OEIS tend to offer one or both of two types of evidence that a sequence is fractal. The first is what we might call a \textit{transformation} rule. Kimberling's ``lower trim'' operation \cite[p.\ 163]{kimberling97}, for example, involves subtracting 1 from all the members of the sequence, then discarding any members that have reached a predefined lower bound (e.g., $0$ or $-1$). 

The second sort of evidence one usually finds in the OEIS (see Gilleland \cite{gilleland}) is what we follow Faye \cite{faye} in calling a ``decimation rule.'' This term seems to be borrowed from signal processing (see Taylor \cite[p.\ 747]{taylor} and F\'{u}ster-Sabater and Cardell \cite[p.\ 79]{fuster}) and refers to a method for deciding which members of a sequence, $a_n$, are to be considered as belonging to a subsequence, $a_{n_i}$. 

Since a transformation rule alters the values of terms in a sequence, it fails to demonstrate that the original sequence was self-containing. Decimation rules, in contrast, simply select members of the original sequence for inclusion in a subsequence. For our purposes, then, we need to work out an appropriate decimation rule if we want to show that the various members of \svpns{p} are fractal.

\subsubsection{The decimation rule}
Let \textit{the remainder of the sequence} refer to the terms in \svpn{p} that have yet to be evaluated for inclusion or exclusion from the subsequence. With that meaning in mind, consider the following three-step decimation rule:

\begin{enumerate}
\item Exclude the first $p$ terms in the remainder of the sequence from membership in the subsequence.
\item Include the next term of the sequence as a member of the subsequence.
\item Go to step 1.
\end{enumerate}

We claim that applying this decimation rule to \svpn{p} will identify a proper subsequence of \svpn{p} that is identical to \svpn{p}. For \svpn{2}, the decimation rule would produce:

\begin{table}[H]
\centering
\begin{tblr}{rowspec={Q[c]Q[c]}, colspec={Q[r]Q[l,27em]}, colsep=5pt}
Original: & {0, 1, 0, 2, 0, 1, 0, 3, 0, 1, 0, 2, 0, 1, 0, 4, 0, 1, 0, 2, 0, 1, 0, 3,\\ 0, 1, 0, 2, 0, 1, 0, 5, 0, 1, 0, 2, 0, 1, 0, 3, 0, 1, 0, 2, 0, 1, 0, 4, $\ldots$}\\
Decimated: & {{\color{lightgray}0, 1,} 0, {\color{lightgray}2, 0,} 1, {\color{lightgray}0, 3,} 0, {\color{lightgray}1, 0,} 2, {\color{lightgray}0, 1,} 0, {\color{lightgray}4, 0,} 1, {\color{lightgray}0, 2,} 0, {\color{lightgray}1, 0,} 3,\\ {\color{lightgray}0, 1,} 0, {\color{lightgray}2, 0,} 1, {\color{lightgray}0, 5,} 0, {\color{lightgray}1, 0,} 2, {\color{lightgray}0, 1,} 0, {\color{lightgray}3, 0,} 1, {\color{lightgray}0, 2,} 0, {\color{lightgray}1, 0,} 4, $\ldots$}
\end{tblr}
\caption{The decimation rule applied to \svpn{2}.}
\end{table}

\noindent For \svpn{3} it would produce:

\begin{table}[H]
\centering
\begin{tblr}{rowspec={Q[c]Q[c]}, colspec={Q[r]Q[l,27em]}}
Original: & {0, 0, 1, 0, 0, 1, 0, 0, 2, 0, 0, 1, 0, 0, 1, 0, 0, 2, 0, 0, 1, 0, 0, 1, 0, 0, 3, 0, 0, 1, 0, 0, 1, 0, 0, 2, 0, 0, 1, 0, 0, 1, 0, 0, 2, 0, 0, 1, 0, 0, 1, 0, 0, 3, 0, 0, 1, 0, 0, 1, 0, 0, 2, 0, 0, 1, 0, 0, 1, 0, 0, 2, 0, 0, 1, 0, 0, 1, 0, 0, 4, 0, 0, 1, 0, 0, 1, 0, 0, 2, 0, 0, 1, 0, 0, 1, 0, 0, 2, 0, 0, 1, 0, 0, 1, 0, 0, 3, $\ldots$}\\
Decimated: & 
{{\color{lightgray}0, 0, 1,} 0, {\color{lightgray}0, 1, 0,} 0, {\color{lightgray}2, 0, 0,} 1, {\color{lightgray}0, 0, 1,} 0, {\color{lightgray}0, 2, 0,} 0, {\color{lightgray}1, 0, 0,} 1, {\color{lightgray}0, 0, 3,} 0, {\color{lightgray}0, 1, 0,} 0, {\color{lightgray}1, 0, 0,} 2, {\color{lightgray}0, 0, 1,} 0, {\color{lightgray}0, 1, 0,} 0, {\color{lightgray}2, 0, 0,} 1, {\color{lightgray}0, 0, 1,} 0, {\color{lightgray}0, 3, 0,} 0, {\color{lightgray}1, 0, 0,} 1, {\color{lightgray}0, 0, 2,} 0, {\color{lightgray}0, 1, 0,} 0, {\color{lightgray}1, 0, 0,} 2, {\color{lightgray}0, 0, 1,} 0, {\color{lightgray}0, 1, 0,} 0, {\color{lightgray}4, 0, 0,} 1, {\color{lightgray}0, 0, 1,} 0, {\color{lightgray}0, 2, 0,} 0, {\color{lightgray}1, 0, 0,} 1, {\color{lightgray}0, 0, 2,} 0, {\color{lightgray}0, 1, 0,} 0, {\color{lightgray}1, 0, 0,} 3, $\ldots$}
\end{tblr}
\caption{The decimation rule applied to \svpn{3}.}
\end{table}

The decimation rule seems to work, but we need more than a visual scan of a few terms to be sure. 

\begin{lemma}
Every sequence, \svpn{p}, is predictably self-containing.
\end{lemma}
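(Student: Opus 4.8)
The plan is to pin down exactly which indices the decimation rule keeps and then verify that the retained terms reproduce \svpn{p} term by term. First I would do the index bookkeeping: each pass of the rule discards $p$ terms and keeps the very next one, so after $i$ passes it has consumed $i(p+1)$ terms of the original and retained exactly $i$ of them, with the $i$th retained term occupying index $n_i = i(p+1)$. Crucially, this formula depends on $i$ alone and never mentions the value $v_p(i)$, so it already delivers the ``predictability'' half of the claim in the sense of Definition 10: we have an explicit algorithm telling us \emph{where} the $i$th subsequence term lives, with no \textit{post hoc} appeal to its value.

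The substance is then to show self-containment (Definition 9), namely that the $i$th retained term equals the $i$th term of \svpn{p}; since the $i$th term of \svpn{p} is $v_p(i)$ and the retained term at index $n_i$ is $v_p\big(i(p+1)\big)$, this amounts to the single identity $v_p\big(i(p+1)\big) = v_p(i)$ for every positive integer $i$. I would prove this straight from the definition of $v_p$ as the largest exponent $k$ with $p^k$ dividing its argument: because $p+1 \equiv 1 \pmod{p}$, the prime $p$ does not divide $p+1$, hence $\gcd(p^k, p+1)=1$ and $p^k \mid i(p+1)$ holds exactly when $p^k \mid i$. The two integers are therefore divisible by precisely the same powers of $p$, so their valuations agree. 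Equivalently one could cite complete additivity, $v_p(ab)=v_p(a)+v_p(b)$, together with $v_p(p+1)=0$, but the divisibility version needs no outside lemma.

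To finish I would observe that the subsequence is \emph{proper}, as Definition 9 demands: the $p$ terms discarded in each pass are omitted forever, so the retained indices $\{i(p+1)\}$ form a strict subset of $\mathbb{N}$ and $a_{n_i}$ is genuinely a subsequence rather than all of \svpn{p}. Assembling the explicit index rule $n_i = i(p+1)$, the valuation identity giving $a_{n_i}=a_i$, and properness yields predictable self-containment for every prime $p$. I do not expect a real obstacle here: the only step requiring care is the index arithmetic, confirming that the ``discard $p$, keep one'' cycle lands the $i$th kept term precisely at $i(p+1)$ and not at a neighboring offset, after which the valuation identity does all the remaining work in a single line.
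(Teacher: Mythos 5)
Your proof is correct and takes essentially the same approach as the paper: both identify the retained indices as the multiples of $p+1$ and reduce self-containment to the identity $v_p\bigl(i(p+1)\bigr)=v_p(i)$, which holds because $p$ does not divide $p+1$. The only cosmetic differences are that the paper phrases the key step via additivity of the valuation (the ``sum of the largest $r$ and $s$'') where you argue directly from divisibility, and that you explicitly verify properness of the subsequence, which the paper leaves implicit.
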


\begin{proof}
By Definition 7 (and Proposition 4), the $m$th term in \svpn{p} is the largest integer $k$ such that $p^k$ divides $m$. The decimation rule selects for inclusion all those terms at indexes that are multiples of $p+1$. If $m=f \times (p+1)$ (where $f$ is a positive integer), the $m$th term in \svpn{p} will be the sum of the largest $r$ and $s$ such that $p^{r}$ divides $f$ and $p^{s}$ divides $(p+1)$. But $p$ cannot be a factor of $(p+1)$. Therefore, the $m$th term will just be the largest $r$ such that $p^{r}$ divides $f$. 

This means that the term at index $m=f \times (p+1)$ in \svpn{p} will be identical to the term at index $f$. So, the subsequence consisting of all terms whose indexes have the form $m=f \times (p+1)$ is identical to the entire sequence. That is, $a_{n_f}=a_f$. Therefore, by Definition 9, every sequence in \svpns{p} is self-containing. But the decimation rule does not require us to know the value of $a_i$ in order to select the $i$th term of $a_{n_i}$. Therefore, by Definition 10, every sequence in \svpns{p} is predictably self-containing.
\end{proof}

\subsection{The aperiodicity requirement} All that remains, then, is to show that \svpn{p} is aperiodic. 

\begin{lemma}
Every sequence, \svpn{p}, is aperiodic.
\end{lemma}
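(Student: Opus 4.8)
The plan is to argue by contradiction, exploiting the tension between periodicity and the unbounded growth of the $p$-adic valuation. By Definition 11, showing that \svpn{p} is aperiodic amounts to showing that there is no finite block $b_n$ of length $L$ with $\svpn{p} = b_n + b_n + b_n + \cdots$. The single fact I need is that the entries of \svpn{p} are not bounded above, and the whole argument is essentially the observation that a purely periodic sequence \emph{is} bounded.

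First I would record the relevant arithmetic. By Definition 7 (equivalently, by Proposition 4), the term of \svpn{p} at index $p^k$ equals the largest $j$ with $p^j \mid p^k$, which is exactly $k$. Hence for every positive integer $M$ there is an index, namely $p^{M+1}$, whose term in \svpn{p} is strictly greater than $M$. In other words, \svpn{p} attains arbitrarily large values.

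Next I would suppose, for contradiction, that \svpn{p} were periodic, i.e. that some finite sequence $b_n$ of length $L$ satisfied $\svpn{p} = b_n + b_n + b_n + \cdots$. Then every term of \svpn{p} would coincide with one of the finitely many entries of $b_n$, so the sequence would be bounded above by $M := \max\{b_1, \ldots, b_L\}$, a finite number. This contradicts the preceding paragraph, which exhibits the term at index $p^{M+1}$ as exceeding $M$. Therefore no such $b_n$ exists, and by Definition 11 every sequence \svpn{p} is aperiodic.

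I do not anticipate a genuine obstacle here: the entire content is that periodicity forces boundedness while the $p$-adic valuation is unbounded. The only point that warrants a moment's care is confirming that Definition 11 is the "repetition of a single finite block" notion, so that the bound $M$ is immediate; were one instead to allow merely \emph{eventual} periodicity, the same argument applied to any tail of the sequence would still succeed, since $p^{M+1}$ can be chosen beyond any fixed starting point, but Definition 11 renders even that refinement unnecessary.
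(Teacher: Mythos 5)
Your proof is correct and takes essentially the same approach as the paper's: both arguments rest on the fact that a periodic sequence is bounded by the maximum of its repeating block, while \svpn{p} is unbounded. The only cosmetic difference is that you establish unboundedness directly from $v_p(p^k)=k$, whereas the paper appeals to the sieve's increment step growing the largest term with each iteration; your version is, if anything, the tidier formulation of the same idea.
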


\begin{proof}
To show that \svpn{p} was \textit{periodic}, we would need to show that we could find an index $q$ such that if we split the sequence at $q$, copies of the subsequence consisting of the first through $q$th terms could be appended together to generate the entire sequence. But in the sequences generated by the sieve above, there can be no such $q$. As step 2 of the sieve moves from iteration to iteration, it increments the final number in the sequence. But this means that the longer the sequence grows, the larger its largest term becomes. This means, in turn, that there will always be an infinite number of terms in the complete sequence that are larger than (not the same as) any term in the subsequence that starts with the first term and ends with the $q$th, for finite $q$. Therefore, every sequence, \svpn{p}, is aperiodic.
\end{proof}

\begin{theorem}
Every sequence in \svpns{p} is fractal.
\end{theorem}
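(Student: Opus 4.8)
The plan is to observe that this statement is an immediate consequence of the two preceding lemmas together with Definition 8, so the proof should be a short appeal to conjunction, exactly in the spirit of the proofs of Lemma 3, Lemma 5, and the sieve theorem. First I would recall that, by Definition 8, a sequence is fractal precisely when it satisfies two independent conditions: being predictably self-containing and being aperiodic. The theorem therefore factors cleanly into these two conjuncts, each of which has already been established in full generality for an arbitrary prime $p$.

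Next I would invoke the two lemmas directly. The lemma on predictable self-containment (proved via the decimation rule, which selects the terms at indexes that are multiples of $p+1$ and shows the resulting subsequence equals the whole) gives the first conjunct for every \svpn{p}. The aperiodicity lemma (proved by noting that the incrementing step in step 2 of the sieve forces arbitrarily large terms to appear, so no finite initial block can be repeated to regenerate the sequence) gives the second. Since both lemmas quantify over every prime $p$, they cover exactly the members of \svpns{p}, which is by Definition 7 the set of all \svpn{p} sequences. Applying Definition 8 to each such sequence then yields that every element of \svpns{p} is fractal.

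Honestly, there is no substantive obstacle remaining at this stage; the real work was done in the two lemmas. The one point worth a sentence of care is confirming that the universal phrasing ``every sequence \svpn{p}'' in those lemmas ranges over precisely the collection named here by ``every sequence in \svpns{p}''; once that identification is stated, the conclusion follows by conjunction under the definition, and nothing further need be computed.
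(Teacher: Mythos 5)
Your proposal is correct and matches the paper's own proof essentially verbatim: the paper likewise proves the theorem by citing Definition 8 and combining the predictable self-containment lemma with the aperiodicity lemma via conjunction. Your added remark that the lemmas' quantification over primes $p$ coincides with membership in \svpns{p} is a sound (if implicit in the paper) bookkeeping point, and nothing further is needed.
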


\begin{proof}
By Definition 8, a sequence is fractal if it is both predictably self-containing and aperiodic. By Lemma 12, every sequence in \svpns{p} is predictably self-containing. By Lemma 13, every sequence in \svpns{p} is aperiodic. Therefore, every sequence in \svpns{p} is fractal.
\end{proof}

\subsection{Discussion}
The exponent of a given prime in the prime factorization of any positive integer $>1$ is determined by a fractal integer sequence. That is, whether a given prime is present in a given prime factorization, and ``how much'' of it is present, follows a fractal pattern. The steadily-increasing sequence of natural numbers is the product of the interweaving of an infinite number of primes, each fluctuating according to its own fractal plan.

The talk of ``fractals'' is more than a metaphor, furthermore. At some level, the \svpn{p} sequences share the same structure as curves that are fractal in the original, geometrical sense. It will be our task in the next section to demonstrate this for \svpn{2} specifically.

\section{The 2-adic valuation sequence and the \levy Dragon}
We tend to think of the natural numbers as distributed at regular intervals along a horizontal line. If we mark each integer with a dot instead of a tick, we obtain the following:

\begin{figure}[H]
\centering
\includegraphics{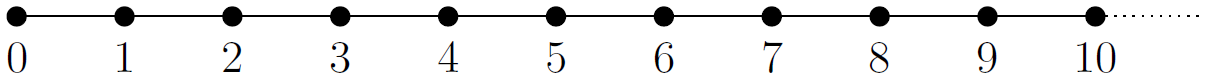}
\caption{The number line as a ``tree.''}
\end{figure}
\noindent Presented in this way, the number line becomes something like a rooted tree, where the (equal) length of each edge represents the fact that all edges have a weight of 1. Nothing particular seems to be represented by the angle between consecutive edges on a number line, however. We \textit{try} to draw number lines straight, but the world did not come to an end when Ulam drew his spiral. So, what if we put the angle between successive number line segments to use? 

Say, for example, that we wanted to represent \svpn{2} with the number line. Each number along the line (except 0) would have an associated term in the sequence. Imagine that we represent that term with a ``turn'' at that point. For example, $v_2(1)$ is 0, so the number line might turn $0\times90$ degrees to the left at 1. The value of $v_2(2)$ is 1, however, so the number line might turn $1\times90$ degrees to the left at 2. Furthermore, $v_2(4)$ is 2, so the number line might turn $2\times90$ degrees to the left at 4. 

If we interpret \vpn{n} as the number of counterclockwise 90-degree turns to make at $n$, the number line would take on the following shape:

\begin{figure}[H]
\centering
\includegraphics{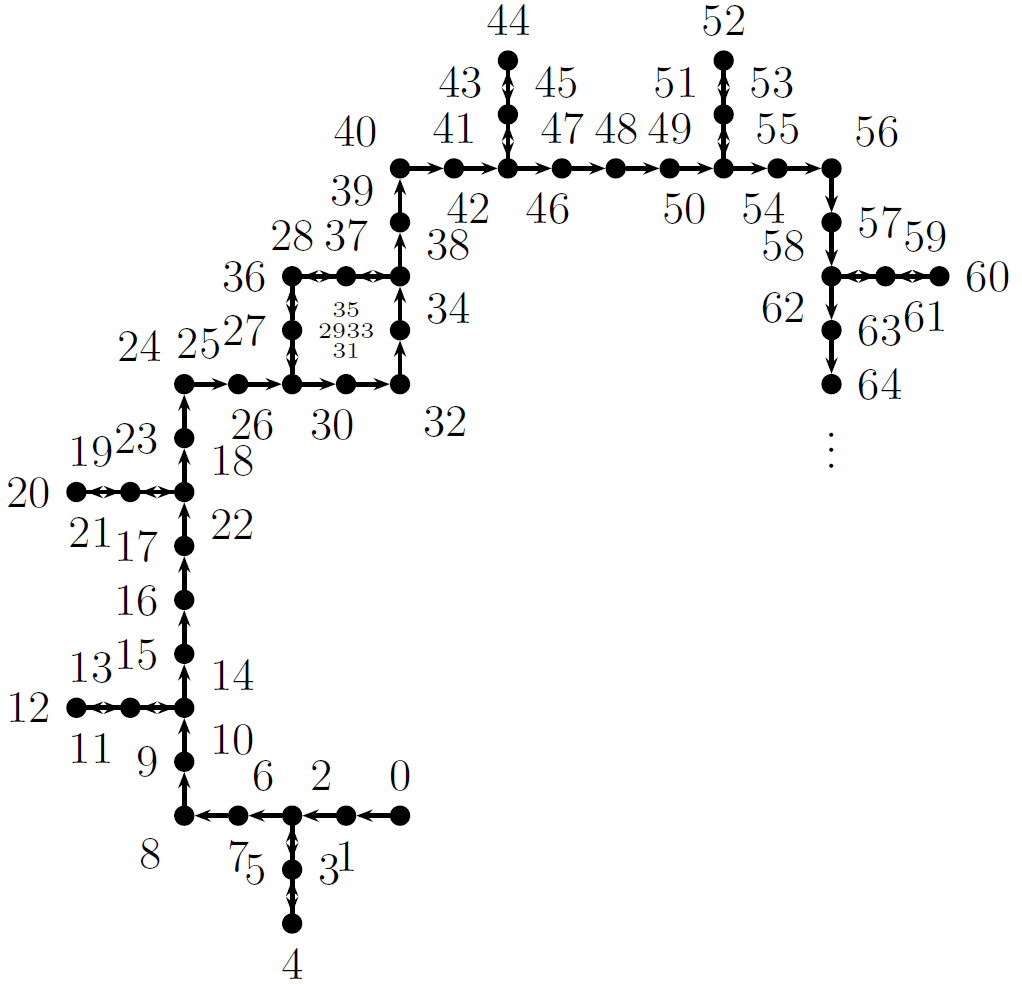}
\caption{The number line, with 90 degree turns representing \svpn{2}.}
\end{figure}

This is just the beginning of the number line, however. If we could ``zoom out'' to take in the entire number line in this configuration, we would see something like this:

\begin{figure}[H]
\centering
\includegraphics[scale=1]{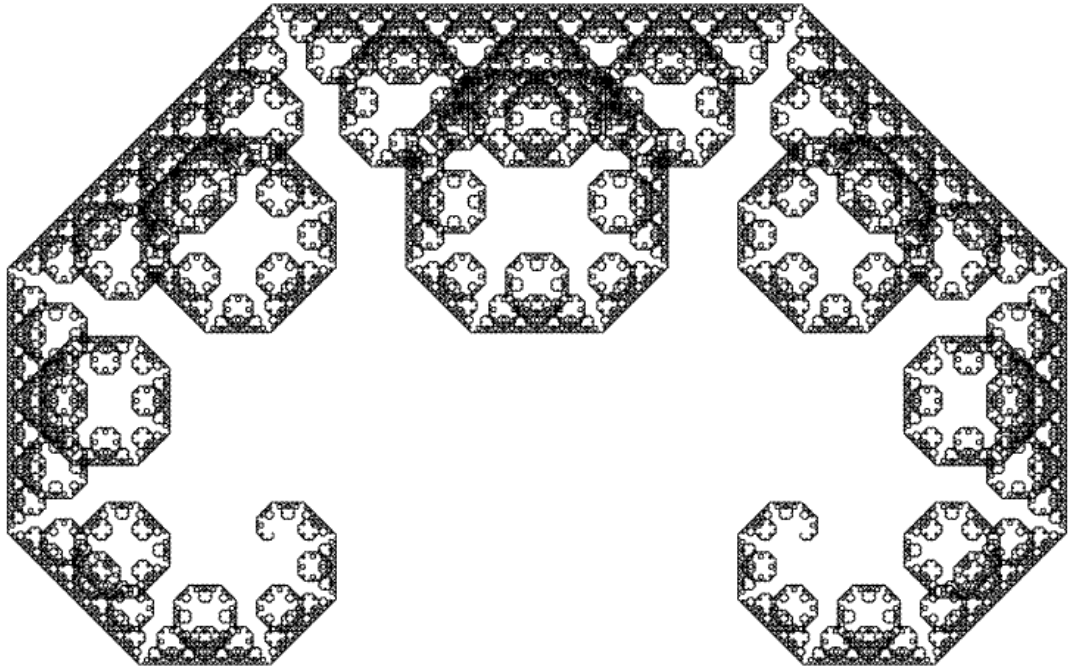}
\caption{The number line (without numbers) with 90-degree turns representing \vpn{2}.}
\end{figure}

\noindent The reader may recognize this as the \levy Dragon fractal (or ``\levy C Curve''). 

\subsection{Two sequences produce the same curve}

It is initially surprising that \svpn{2} should generate a well-known fractal curve whose definition has nothing to do with prime factorizations. A search of the OEIS, however, reveals sequence \seqnum{A346070}, entitled, ``Symbolic code for the corner turns in the \levy dragon curve.'' Its first few terms are as follows:

\begin{table}[H]
\centering
\begin{tblr}{colspec={Q[c, 31em]}}
0, 1, 0, 2, 0, 1, 0, 3, 0, 1, 0, 2, 0, 1, 0, \circled{0}, 0, 1, 0, 2, 0, 1, 0, 3, 0, 1, 0, 2, 0, 1, 0, \circled{1}, 0, 1, 0, 2, 0, 1, 0, 3, 0, 1, 0, 2, 0, 1, 0, \circled{0}, 0, 1, 0, 2, 0, 1, 0, 3, 0, 1, 0, 2, 0, 1, 0, \circled{2}, 0, 1, 0, 2, 0, 1, 0, 3, 0, 1, 0, 2, 0, 1, 0, \circled{0}, 0, 1, 0, 2, 0, 1, 0, \ldots
\end{tblr}
\caption{The terms of \seqnum{A346070} (terms differing from \svpn{2} circled).}
\end{table}

\noindent Excepting the circled terms, this sequence appears to be identical to \svpn{2}:

\begin{table}[H]
\centering
\begin{tblr}{colspec={Q[c, 31em]}}
0, 1, 0, 2, 0, 1, 0, 3, 0, 1, 0, 2, 0, 1, 0, \circled{4}, 0, 1, 0, 2, 0, 1, 0, 3, 0, 1, 0, 2, 0, 1, 0, \circled{5}, 0, 1, 0, 2, 0, 1, 0, 3, 0, 1, 0, 2, 0, 1, 0, \circled{4}, 0, 1, 0, 2, 0, 1, 0, 3, 0, 1, 0, 2, 0, 1, 0, \circled{6}, 0, 1, 0, 2, 0, 1, 0, 3, 0, 1, 0, 2, 0, 1, 0, \circled{4}, 0, 1, 0, 2, 0, 1, 0, \ldots
\end{tblr}
\caption{The terms of \svpn{2} (terms differing from \seqnum{A346070} circled).}
\end{table}

\noindent The differences, moreover, are relatively easy to explain. Sequence \seqnum{A346070}'s initial terms match \svpn{2} mod 4, and four 90-degree turns in a direction are the same as none, 5 are the same as 1, 6 are the same as 2, and so on. So, a sequence interpreted in terms of of 90-degree turns ought to produce the same figure when read mod 4 as when read unmodified. 

Upon closer examination, however, something is off. The first few terms of \seqnum{A346070} produce the following figure:

\begin{figure}[H]
\centering
\includegraphics{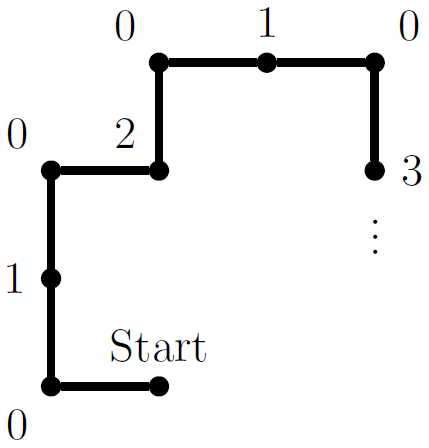}
\caption{A curve with 90-degree turns representing \seqnum{A346070}.}
\end{figure}

\noindent In contrast, an analogous segment of the curve generated by \svpn{2} looks like this:

\begin{figure}[H]
\centering
\includegraphics{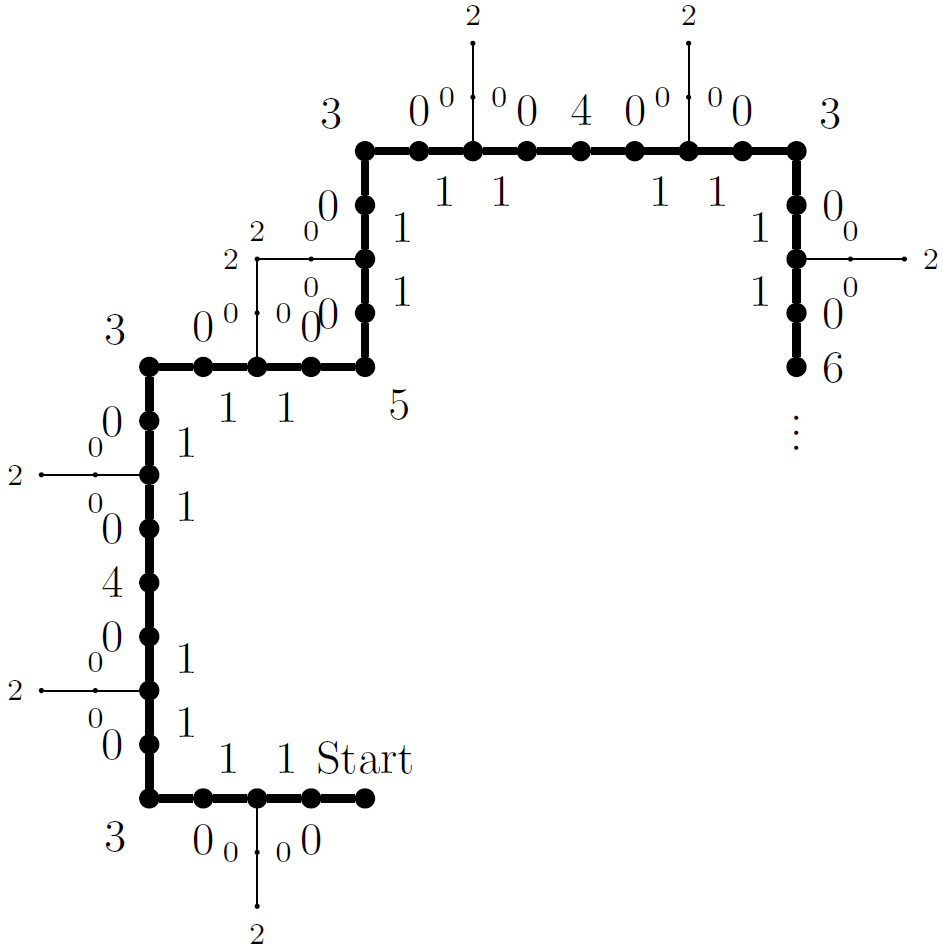}
\caption{A curve with 90-degree turns representing \svpn{2}.}
\end{figure}

\noindent So, even though the two sequences appear to be equivalent, and it would seem that both \textit{ultimately} generate the \levy Dragon, they construct that curve from different building blocks. What is going on?

The low-level differences are due to different mappings between terms and turns. The entry for \seqnum{A346070} suggests we map 0 to a right turn, 1 to no turn, 2 to a left turn, and 3 to an about-face. The mapping we suggested above, in contrast, treats the terms in \svpn{2} as the number of 90-degree turns to make in a single direction.

With such low-level differences, however, how can we be sure that both sequences produce the same geometrical figure? More specifically, can be confident that \svpn{2} produces the \levy Dragon?

\subsection{Motivation}

Before we continue, we ought to address a potential concern: why go to the effort of proving that \svpn{2} produces the \levy Dragon (using the term-to-turn mapping we have proposed), when the entry for \seqnum{A346070} has all but confirmed that it does (using a different mapping)?

The answer is partly philosophical and partly pedagogical. The mapping proposed by the entry for \seqnum{A346070} can seem arbitrary, at least at first. In it, numerals appear to categorize turns but not \textit{quantify} them. So, why not use \{1, 2, 3, 4\}, or \{-2, -1, 1, 2\}, instead of \{0, 1, 2, 3\}? In fact, why use numerals rather than letters, or perhaps arrows? 

A central goal of this paper is to help students see \emd and to help instructors help students see \emd a beautiful structure in something important but (apparently) messy. To that end, it would be helpful to minimize (apparent) arbitrariness where we can. We will have to choose \textit{some} interpretation if we want to get geometry out of integer sequences, but the ``counting turns'' interpretation we employ here is the simpler and less \textit{ad hoc} option. (And, perhaps equally importantly, it creates an interesting challenge.)

\subsection{A sequence of spikes and turns}
Our primary problem comes from the fact that the figure generated by our interpretation of \svpn{2} is ``hairy'' or ``spiky.'' Those spikes, furthermore, seem to be generated by the frequent recurrence of the subsequence, $\langle 0, 1, 0, 2, 0, 1, 0\rangle$, within \svpn{2}.

\begin{figure}[H]
\centering
\includegraphics{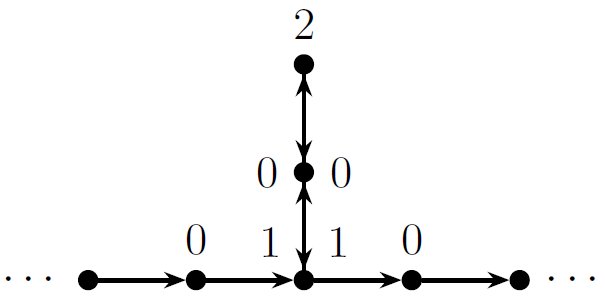}
\caption{A ``spike'' on the \levy Dragon.}
\end{figure}

\noindent In fact, it might be more accurate to say that \svpn{2} consists of instances of that subsequence connected by a series of other numbers, each greater than 2.

\begin{table}[H]
\centering
\begin{tblr}{colspec={Q[l]}}
{\color{gray}0, 1, 0, 2, 0, 1, 0,} \textbf{3}, {\color{gray}0, 1, 0, 2, 0, 1, 0,} \textbf{4}, {\color{gray}0, 1, 0, 2, 0, 1, 0,} \textbf{3}, {\color{gray}0, 1, 0, 2, 0, 1, 0,} \textbf{5},\\ 
{\color{gray}0, 1, 0, 2, 0, 1, 0,} \textbf{3}, {\color{gray}0, 1, 0, 2, 0, 1, 0,} \textbf{4}, {\color{gray}0, 1, 0, 2, 0, 1, 0,} \textbf{3}, {\color{gray}0, 1, 0, 2, 0, 1, 0,} \textbf{6},\\ 
{\color{gray}0, 1, 0, 2, 0, 1, 0,} \textbf{3}, {\color{gray}0, 1, 0, 2, 0, 1, 0,} \textbf{4}, {\color{gray}0, 1, 0, 2, 0, 1, 0,} \textbf{3}, {\color{gray}0, 1, 0, 2, 0, 1, 0,} \textbf{5},\\ 
{\color{gray}0, 1, 0, 2, 0, 1, 0,} \textbf{3}, {\color{gray}0, 1, 0, 2, 0, 1, 0,} \textbf{4}, {\color{gray}0, 1, 0, 2, 0, 1, 0,} \textbf{3}, {\color{gray}0, 1, 0, 2, 0, 1, 0,} \textbf{7},\\. . .
\end{tblr}
\caption{The terms between the spikes in \vpn{2}.}
\end{table}

From comparing figures 4 and 5, furthermore, it would seem that the ``T'' shapes play the same role in the \svpn{2} version of the \levy Dragon as single line segments do in the \seqnum{A346070} version of thereof. Though you take a small detour along the way, the overall effect of traversing a T is the same as traversing a single straight line.

If we want to show that our interpretation of \svpn{2} produces the \levy Dragon, then, we need to show that the series of ``other numbers'' between copies of the subsequence  $\langle 0, 1, 0, 2, 0, 1, 0\rangle$ match the series of turns made by the \levy Dragon as normally defined.

Since  $\langle 0, 1, 0, 2, 0, 1, 0\rangle$ is seven elements long, the numbers that connect each instance of that subsequence to the next will be found at indexes that are multiples of 8 (i.e., of $2^3$). This means they will all be at indexes of the form, $i=f\times2^3$, where $f$ is a positive integer. What we want to do, then, is to figure out what algorithm generates the sequence of turns in the standard \levy Dragon, and then see whether that algorithm generates the same sequence of numbers as are found in \svpn{2} at indexes of the form, $i=f\times2^3$.

\subsection{Generating the \levy Dragon}
The \levy Dragon is generated by starting with an isosceles right triangle, then constructing smaller isosceles right triangles on the equal sides of that triangle, then constructing smaller right isosceles triangles on their sides, and so on (\levy \cite[p.\ 207]{levy}; Riddle \cite{riddle}).

\begin{figure}[H]
\centering
\includegraphics[scale=0.8]{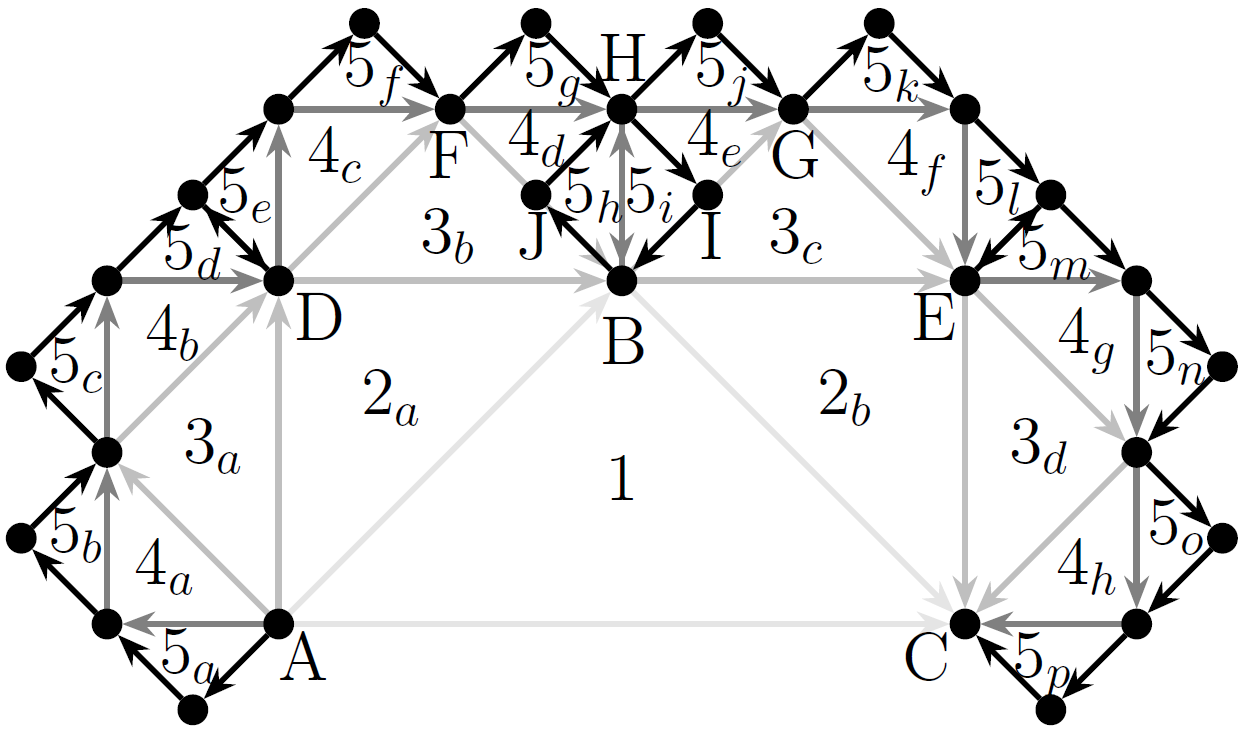}
\caption{The first five iterations of the \levy Dragon.}
\end{figure}
 
Alster \cite[p.\ 857]{alster} points out that once a vertex has been introduced into the figure in one iteration, it will be retained in every iteration thereafter. Alster \cite[pp.\ 857--858]{alster} then focuses on the relative orientations of the line segments that meet at each vertex, and how those orientations change from iteration to iteration. Consider the sequence of edge pairs that meet at B in Figure 7, and the sequence of turns a turtle crawling along the curve from A to C would make at B.\footnote{Note that vertex H, in Figure 8, is actually two vertexes that happen to exactly coincide.}

\begin{table}[H]
\centering
\begin{tblr}{colspec={Q[r]|Q[c]|Q[l]}, rowspec={Q[m]|}}
\textbf{Iteration} & \textbf{Edges} & \textbf{Turn}\\
1 & AB, BC & 90 degrees clockwise\\
2 & DB, BE & 0 degrees\\
3 & FB, BG & 90 degrees counterclockwise\\
4 & H\textsubscript{1}B, BH\textsubscript{2} & 180 degrees\\
5 & IB, BJ & 90 degrees clockwise
\end{tblr}
\caption{The edges that will meet at B, and the turn a turtle would make at B.} 
\end{table}

From B's point of view, the edge leading into it rotates clockwise 45 degrees from iteration to iteration, while the edge leading out of it rotates counterclockwise 45 degrees. This is because B, like every new vertex introduced into the curve in a given iteration, will be the ``apex'' vertex in an isosceles right triangle \emd the vertex where that triangle's two equal sides meet. Those two sides will initially meet at a right angle, one equivalent to a 90-degree clockwise turn if you traverse the curve from A to C. In the next iteration, those two sides will spawn two new isosceles triangles. The one that is closer to A in the curve will be rotated counterclockwise by 45 degrees, relative to the triangle on whose side it spawned, while the one that is closer to C in the curve will be rotated clockwise by 45 degrees. This means the edges that meet at the vertex will both be parallel to the base (hypotenuse) of the triangle that spawned them, and thus will meet at the vertex to form a straight line.

From this point on, the two sides that meet at a given vertex will belong to separate triangles. Each will still be rotated 45 degrees relative to the side on which the triangle to which it belongs spawned, however, and thus each side will be rotated $k\times 45$ degrees relative to the side of the original triangle that introduced the vertex (where $k$ is the number of iterations since the vertex and its triangle were introduced).

In light of this, the angle our turtle must turn at a given vertex in iteration $i$ must differ from the angle it turns in iteration $i+1$ by at most 90 degrees (with 45 of those degrees being due to the rotation of the incoming edge, and the other 45 degrees being due to the rotation of the outgoing edge). If we measure angles clockwise, we will think of our turtle as initially turning 90 degrees at a given vertex, then 0 degrees, then $-90$ degrees, then $-180$, $-270$, $-360$, etc. In contrast, if we measure the angles counterclockwise, we will think of our turtle as initially turning 270 degrees, then 360 degrees, then 450, then 540, etc.

If we think of the clockwise angles as divided into 90-degree turns, we would see our turtle as turning clockwise once at a given vertex, then 0 times, then $-1$ times, then $-2$ times, and so on. It may be difficult to make sense of a negative number of turns, however. We might therefore choose to measure the angles counterclockwise, in which case our turtle would make three 90-degree turns at a vertex, then 4, then 5, etc.

Since the counterclockwise framing avoids talk of negative turns \emd and because it will smooth out the transition back to discussing multiples of 8 in \svpn{2} \emd we will adopt the counterclockwise framing here. Doing so leads to the formula $$t = (270 + j\times90)/90 = 3 + j$$ where $t$ is the number of turns to make at a given vertex and $j$ is the number of iterations since that vertex was introduced.

From that formula, furthermore, we can derive an algorithm for generating the entire sequence of turns. Let each vertex in the figure be represented by the number of turns a turtle crawling along the figure from A to C will make at that vertex during the current iteration. Since A and C mark the beginning and end of the curve, no turns or numbers will be associated with them. 

Each iteration will begin by inserting a new vertex between every pair of vertexes already in the figure. Whenever a new vertex is inserted, its associated number will be 3. A vertex's number will then increase by 1 with each new iteration (as the formula above dictates).

More formally, the algorithm for generating the sequence of turns in the \levy Dragon, framed in terms of 90-degree counterclockwise turns, is as follows.

\begin{enumerate}
\item Start with the sequence $\langle$ 3 $\rangle$.
\item Increase every number in the sequence by 1.
\item Between every number currently in the sequence, insert a 3.
\item Append a 3 to the beginning of the sequence and another to the end.
\item Go to step 2, or stop, as desired.
\end{enumerate}

\noindent Following this algorithm, we would generate this sequence of sequences:

\begin{table}[H]
\centering
\begin{tblr}{colspec={Q[c]|Q[c]|Q[c]}, rowspec={Q[m]|}}
Start & Increment & Insert 3s \\
$\langle$3$\rangle$		&  $\langle$4$\rangle$	& $\langle$3, 4, 3$\rangle$\\
$\langle$3, 4, 3$\rangle$		&  $\langle$4, 5, 4$\rangle$	& $\langle$3, 4, 3, 5, 3, 4, 3$\rangle$	\\
$\langle$3, 4, 3, 5, 3, 4, 3$\rangle$ &  $\langle$4, 5, 4, 6, 4, 5, 4	$\rangle$	& $\langle$3, 4, 3, 5, 3, 4, 3, 6, 3, 4, 3, 5, 3, 4, 3$\rangle$	\\
\end{tblr}
\caption{Three iterations in generating the sequence of turns in the \levy Dragon.}
\end{table}

\noindent This sequence \textit{looks} like the subsequence we were discussing above: the terms in \svpn{2} at indexes that are multiples of 8. But is it?

\begin{theorem}
The algorithm for generating the \levy Dragon's turns also generates the terms in \svpn{2} at indexes that are multiples of 8.
\end{theorem}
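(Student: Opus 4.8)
The plan is to prove the two sequences coincide by induction on the number of iterations of the \levy Dragon algorithm, after first identifying exactly what the target subsequence of \svpn{2} is. First I would pin down the target. By Proposition 4, the term at index $i$ in \svpn{2} is $v_2(i)$, the largest $k$ with $2^k \mid i$. The indexes that are multiples of $8 = 2^3$ have the form $i = f \times 2^3$ for a positive integer $f$; writing $f = 2^s u$ with $u$ odd gives $i = 2^{s+3} u$, so $v_2(i) = s + 3 = v_2(f) + 3$. Hence the subsequence I am chasing is precisely $\langle v_2(f) + 3 \rangle_{f \ge 1}$, i.e.\ \svpn{2} itself with $3$ added to every term. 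This reframing is what makes the induction clean.

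Next I would set up the induction. Let $S_k$ denote the sequence after $k$ passes through steps 2--4 of the algorithm, with $S_0 = \langle 3 \rangle$. A short computation gives $|S_0| = 1$ and $|S_{k+1}| = 2|S_k| + 1$, so $|S_k| = 2^{k+1} - 1$. The claim to establish is: for every $k \ge 0$ and every $f$ with $1 \le f \le 2^{k+1} - 1$, the $f$th term of $S_k$ equals $v_2(f) + 3$. The base case $k = 0$ is immediate, since $v_2(1) + 3 = 3$.

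The heart of the argument, and where I expect the main obstacle to lie, is the inductive step: correctly tracking how the three operations --- increment, insert-a-3-between-consecutive-terms, and append-a-3-at-each-end --- relabel the indexes. Assuming the $f$th term of $S_k$ is $v_2(f) + 3$, the increment step turns it into $v_2(f) + 4$. Inserting a $3$ between consecutive terms and then prepending and appending a $3$ places the (incremented) old terms at the \emph{even} positions of $S_{k+1}$ and a fresh $3$ at every \emph{odd} position; I would state this position-to-value map for $S_{k+1}$ explicitly before proceeding. The verification then splits on the parity of the new index $g$. If $g$ is odd, then $v_2(g) = 0$, so the target value is $3$, which is exactly the inserted or appended $3$ found there. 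If $g$ is even, write $g = 2f$; the value there is the incremented old term $v_2(f) + 4$, and since $v_2(2f) = v_2(f) + 1$ the target $v_2(g) + 3 = v_2(f) + 4$ matches. This closes the induction.

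Finally, since each $S_k$ agrees with the fixed sequence $\langle v_2(f) + 3 \rangle_{f \ge 1}$ on the whole of its domain, the prefixes are nested and the algorithm's infinite output is exactly $\langle v_2(f) + 3 \rangle_{f \ge 1}$. By the first step this is precisely the subsequence of \svpn{2} at indexes that are multiples of $8$, which is the theorem. The only genuinely delicate point is the index bookkeeping in the inductive step; once the parity split is set up (odd indexes forced to $3$, even indexes handled by the doubling identity $v_2(2f) = v_2(f) + 1$), the rest is routine.
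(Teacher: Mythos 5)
Your proposal is correct, and it is organized differently from the paper's proof. The paper argues term-by-term: it first checks that the freshly inserted $3$s at odd indexes of \lseq{} match $v_2(o\times 2^3)=3$, then tracks the trajectory of a single term born at odd index $o$ through successive iterations (index $2^j o$, value $3+j$) and checks it against $v_2(2^j o \times 2^3)=3+j$, finishing with an informal ``this leaves no gaps'' argument that the tracked terms and fresh $3$s together account for every index. You instead prove a whole-sequence invariant by induction on the iteration count $k$: every term of $S_k$ at position $f$ equals $v_2(f)+3$, with the inductive step split by parity of the new position $g$ (odd $g$ forced to $3$ since $v_2(g)=0$; even $g=2f$ handled by $v_2(2f)=v_2(f)+1$). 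Your route buys two things the paper's lacks: the completeness worry is discharged automatically, since the invariant covers every index of every $S_k$ and the parity split is exhaustive, whereas the paper's no-gaps step is a prose assertion; and your opening reframing $v_2(8f)=v_2(f)+3$ identifies the target subsequence as \svpn{2} itself translated up by $3$, a structural observation the paper never states (it recomputes $v_2(2^j o\times 2^3)$ case by case). What the paper's version buys in exchange is pedagogical transparency: following one vertex's turn count as it increments while its index doubles mirrors the geometric picture of the \levy Dragon construction, which suits the paper's expository aims. Your length bookkeeping ($|S_k|=2^{k+1}-1$) and the prefix-nesting remark at the end are both correct and make the passage to the infinite sequence explicit, another point the paper leaves implicit.
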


\begin{proof}
In what follows, let \lseq{} be the integer sequence generated by the algorithm above, and let \lseq{i} be the $i$th term in that sequence.

Every new term introduced into \lseq{} will be a 3, and those 3s will be introduced at odd indexes (1, 3, 5, 7, etc.). This matches the terms in \svpn{2} at indexes that are odd multiples of 8. Each such index has the form $i = o \times 2^3$, where $o$ is an odd positive integer. Now, the term at index $i$ in \svpn{2} represents the largest integer $k$ such that $2^k$ divides $i$. But the largest $k$ such that $2^k$ divides $o \times 2^3$ is just 3, since $o$ is odd and odd numbers are not divisible by 2.

Therefore, the algorithm above places 3s at all odd indexes in \lseq, which matches the 3s we will find at the indexes in \svpn{2} that are odd multiples of 8. Next, then, we need to check whether the terms at even indexes in \lseq{} also match the terms in \svpn{2} at indexes that are even multiples of 8.

The terms at even indexes in \lseq{} all start as 3s at odd indexes, and then are ``shifted back'' and increased by 1 with each iteration. Consider the development from iteration to iteration of the term just introduced at index $o$. The term, \lseq{o}, will be a 3, as we just discussed. In the next iteration, however, it is increased by 1 and $o$ new terms are inserted into the sequence before it. Thus, the term that started at index $o$, as a 3, ends up at index $2o$, as a 4. Is this the same number we will find in \svpn{2} at index $i = 2o \times 2^3$?

The number at index $i = 2o \times 2^3$ in \svpn{2} is the largest $k$ such that $2^k$ divides $2o\times2^3=2^1o\times2^3=o\times2^{3+1}=o\times2^4$. But $o$ is odd, and thus the largest $k$ such that $2^k$ divides $o\times2^4$ must be 4. So, both the term at index $2o$ in \lseq{} and the term at index $2o \times 2^3$ in \svpn{2} will be 4.

In the next iteration, the term at index $2o$ in \lseq{} is increased by 1 and $2o$ new terms are inserted into \lseq{} before it. Thus, the term that started at index $o$, as a 3, ends up at index $4o$, as a 5. Will we also find a 5 at index $i = 4o\times 2^3$ in \svpn{2}? 

The number at index $i = 4o\times 2^3$ in \svpn{2} is the largest $k$ such that $2^k$ divides $4o\times2^3=2^2o\times2^3=o\times2^{3+2}=o\times2^5$. But $o$ is odd, and thus the largest $k$ such that $2^k$ divides $o\times2^5$ must be 5. So, both the term at index $4o$ in \lseq{} and the term at index $4o \times 2^3$ in \svpn{2} will be 5.

This pattern of matching continues, furthermore. After $j$ iterations, the term introduced at index $o$ will be at index $2^jo$ and will have a value of $3+j$. This, furthermore, is the value that will be at index $i=2^jo\times 2^3$ in \svpn{2}, since the largest $k$ such that $2^k$ divides $2^jo\times2^3=o\times2^{3+j}$ is $k=3+j$. 

We conclude, then, that all new terms introduced in a given iteration are placed at the correct indexes (to match the terms in \svpn{2} at indexes that are odd multiples of 8) and given the correct values, while all the terms already present have their values adjusted and are moved to indexes that are correct (to match the terms in \svpn{2} at indexes that are even multiples of 8) for those new values. This leaves no gaps, furthermore. Old terms do not jump arbitrarily to new indexes. New terms are inserted into the sequence, pushing the terms that are already there, back. An old term only ends up at a given index because all the previous indexes are full.

Therefore, \lseq{i} \emd whether $i$ is odd or even \emd is the largest integer $k$ such that $2^k$ divides $i\times8$. And that means the algorithm for generating the \levy Dragon's turns also generates the terms in \svpn{2} at indexes that are multiples of 8.
\end{proof}

\subsection{Discussion} The ``T'' shapes in the figure generated by \svpn{2} are connected by the same series of turns as the line segments in the standard construction of the the \levy Dragon.  What is more, the portion of each T shape that deviates from a straight line does not grow in size from iteration to iteration. This means that the size of these deviations, relative to the overall figure, becomes negligible as the number of iterations increases. As the number of iterations increases, in other words, the curve generated by \svpn{2} \emd under the interpretation we proposed, above \emd approaches the complete \levy Dragon. 

\section{Other primes and fractals}

We now know that which primes appear in the prime factorizations of which positive integers, and what their exponents are therein, is determined by an infinite set of fractal integer sequences. We also know that one of those sequences, \svpn{2}, produces the \levy Dragon fractal when represented by 90-degree turns in the number line. What this shows, I believe, is that our extension of the concept, ``fractal,'' from geometry to integer sequences was not a stretch. A common structure underlies both.

There is nothing particularly unique about \svpn{2}, furthermore. Here is \svpn{3} represented by 90-degree turns:

\begin{figure}[H]
\centering
\includegraphics{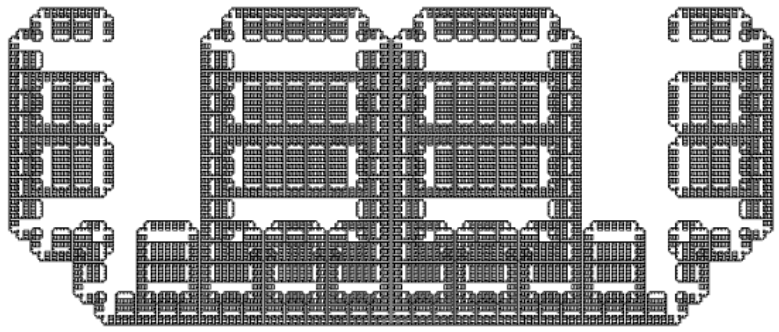}
\caption{The number line with 90-degree turns representing \svpn{3}.}
\end{figure}

\noindent Here is the number line when it represents \svpn{5} by 120 degree turns:

\begin{figure}[H]
\centering
\includegraphics{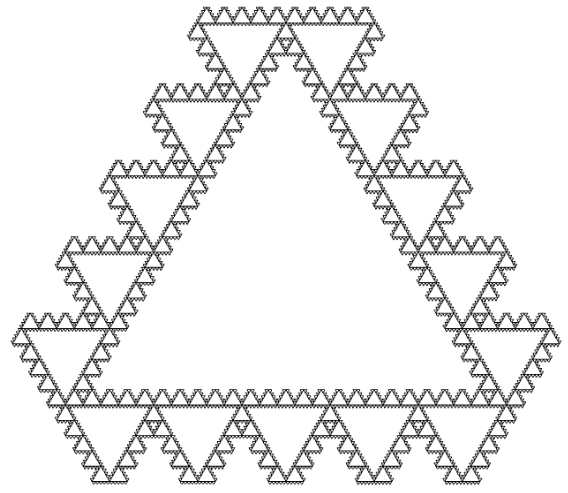}
\caption{The number line with 120-degree turns representing \svpn{5}.}
\end{figure}
\noindent And here is \svpn{7} represented by 135 degree turns:

\begin{figure}[H]
\centering
\includegraphics{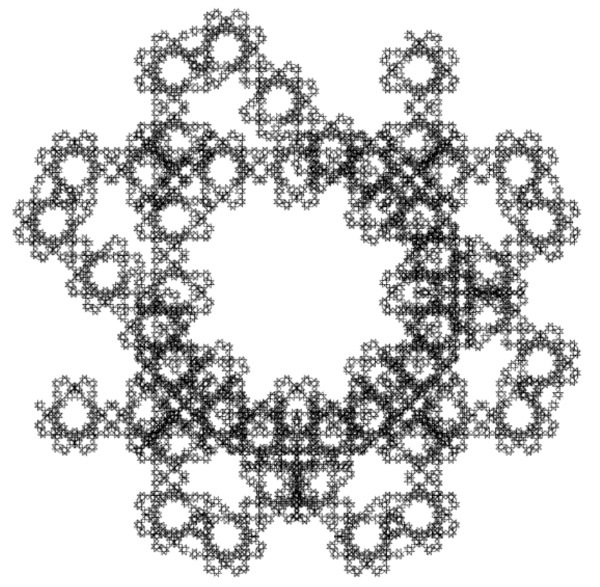}
\caption{The number line with 135-degree turns representing \svpn{7}.}
\end{figure}
 
These angles are not unique, furthermore. I have also foundfractal figures at 120 degrees for \svpn{2} and at 60 degrees for \svpn{5}, for example.

\begin{figure}[H]
\centering
\includegraphics{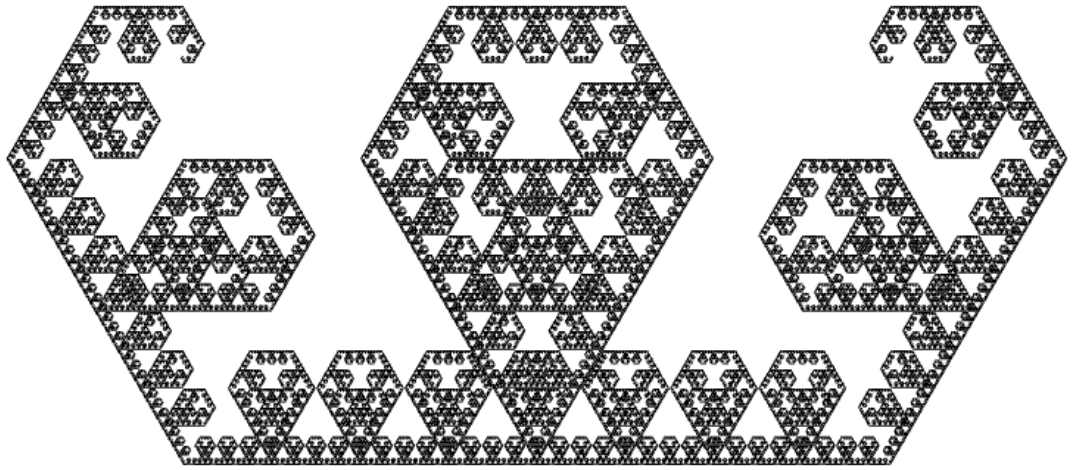}
\caption{The number line with 120-degree turns representing \svpn{2}.}
\end{figure}

\begin{figure}[H]
\centering
\includegraphics{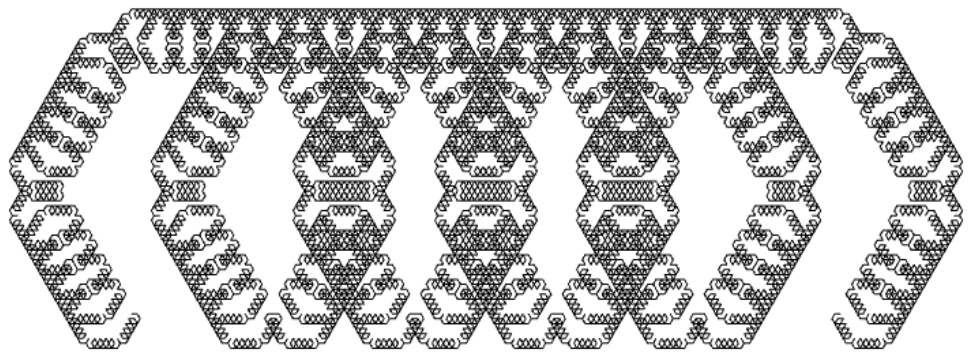}
\caption{The number line with 60-degree turns representing \svpn{5}.}
\end{figure}

This, of course, is not a formal argument that the term ``fractal'' refers to objects with the same underlying structure in both geometry and integer sequences. I have not made that claim precise enough for it to be given a formal demonstration. I simply hope the evidence of a connection between fractal integer sequences and fractal figures is strong enough that others might be encouraged to investigate the topic further. 

Some of the questions for which I would like answers are:
	
\begin{enumerate}
\item Why do some angles produce fractal figures for a given sequence in \svpns{p}, while others do not? (E.g., 90 and 120 work for \svpn{2}, but 60 and 30 don't.)

\begin{table}[H]
\centering
\begin{tblr}{colspec={Q[c]Q[c]}, rowspec={Q[m]}}
\includegraphics{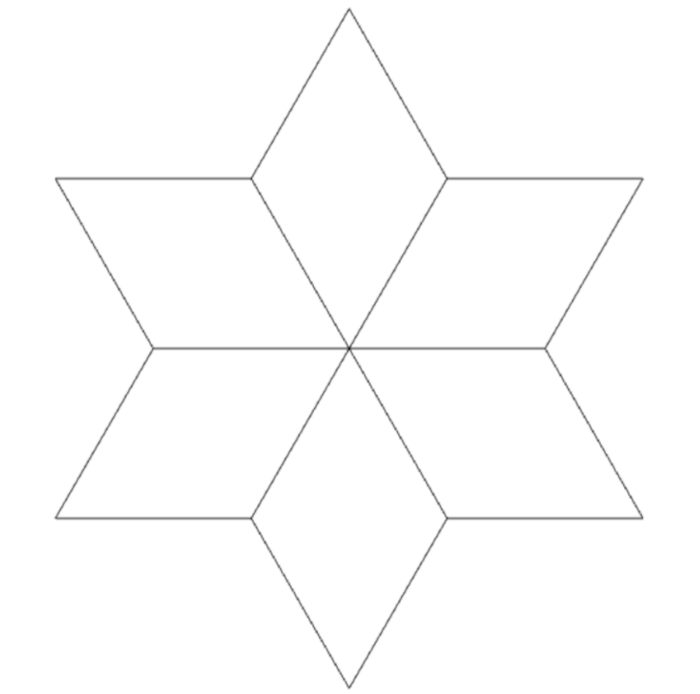} & \includegraphics{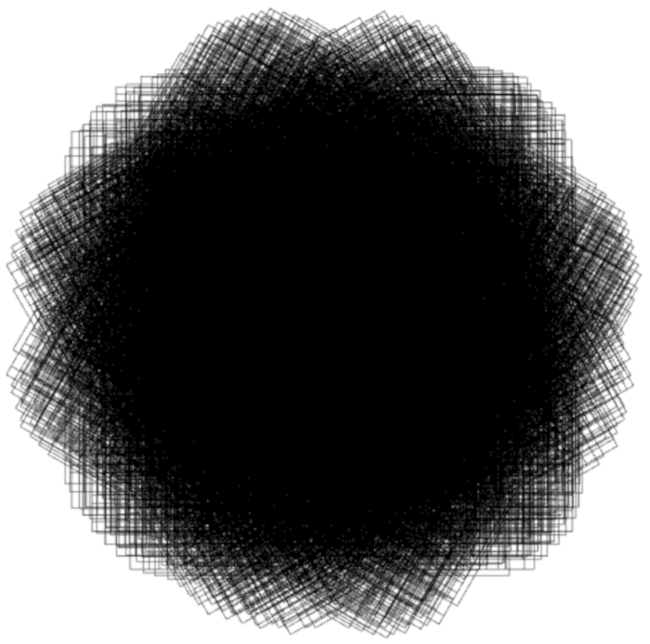}\\
\svpn{2} with 60-degree turns & \svpn{2} with 30-degree turns
\end{tblr}
\caption{Some angles produce non-fractal figures.}
\end{table} 

\item When a particular choice of angle fails to produce a fractal, why does it sometimes produce a clear geometrical pattern, and other times produce a mess?

\item Why do different angles produce fractal figures for different \svpn{p} sequences? (For example, 120 degrees works for \svpn{2} and \svpn{5}, but not \svpn{3}.)

\item Why does the number of iterations a given sequence has gone through in its development change the fractal figure produced in some cases? On which shape should we say the figure generated by the sequence converges, if any?

The figure generated by \svpn{2} at 90 degrees always approximates the \levy Dragon, for example, though the figure rotates depending on the number iterations the sequence has been put through. The figure generated by \svpn{2} at 120 degrees, however, changes in a regular fashion from one iteration of the sequence to the next. Why?

\begin{table}[H]
\centering
\begin{tblr}{colspec={Q[c]Q[c]}, rowspec={Q[m]}}
\includegraphics{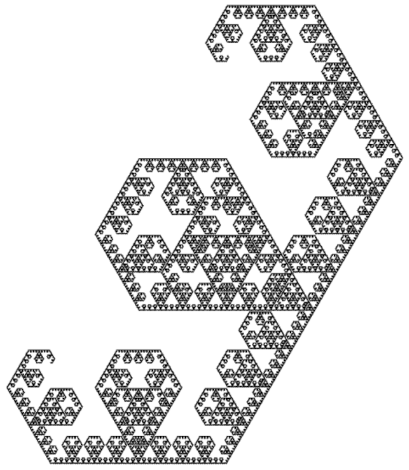} & \includegraphics{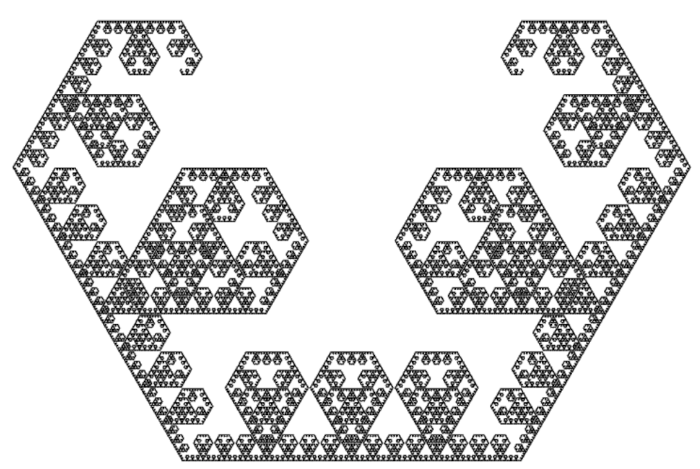}\\
\svpn{2}, 17th iteration, 120 degrees & \svpn{2}, 18th iteration, 120 degrees
\end{tblr}
\caption{The figure produced sometimes changes as iterations increase.}
\end{table} 

\item Is there at least one angle for each \svpn{p} that would generate a fractal figure, or is there some $q$ such that no \svpn{p} will produce a fractal figure if $p>q$? 

\item If the fact that fractal integer sequences ``generate'' fractal figures is evidence (as I claim) that the two classes of object share an underlying structure, what does the fact that those sequences also generate \textit{non}-fractal figures tell us? Can fractal and non-fractal figures share the same structure in some (interesting) sense?

\item Can multiple sequences from \svpns{p} be combined and yet still generate a fractal figure? For example, if $c(n) = a\times\text{\vpn{3}} + b\times\text{\vpn{5}}$, could we find an angle such that the sequence generated by $c(n)$ would produce a fractal figure?

\item The process above, in which we generate integer sequences and interpret them to inform the drawing of geometrical figures, is similar to that involved in Lindenmayer systems. (Much the same phenomena occur with figures generated by Lindenmayer systems as are listed in points (1), (2), (3), and (4), above, furthermore.) Can the duplicate-concatenate-increment algorithm for generating \svpn{p} be translated into a string-replacement algorithm like that employed by Lindenmayer systems? Can the quantitative mapping from terms-to-turns employed above be translated into the sort of categorical mapping usually employed by Lindenmayer systems?
\end{enumerate}

\section{Appendix: The Heighway Dragon and the odd part of $n$}
In this final section, we will need the concepts of the ``odd'' and ``even parts of $n$.'' The even part of $n$ is the largest power of 2 that divides $n$ and can be found in OEIS sequence \seqnum{A006519}. That is, the even part of $n$ is $2^{v_2(n)}$. This is the ``part'' of $n$ associated with the L\'{e}vy Dragon, as explained above.

The odd part of $n$, in contrast, is the largest odd factor of $n$, and can be found in OEIS sequence \seqnum{A000265}. That is, the odd part of $n$ is $n$ divided by its even part, or the product of all the multiplicands with odd bases in the prime factorization of $n$. (For prime factorizations that contain no multiplicands with odd bases, the odd part of $n$ is just 1.) 

In what follows, we will refer to the odd part of some particular $n$ as $o(n)$, and the entire odd part of $n$ sequence as \oddpart. It is our goal in this section to see how \oddpart ~is associated with the Heighway Dragon. We will first determine how an integer sequence representing the turns in that curve can be generated. We will then determine how \oddpart ~can be generated. And finally, we will show that the algorithm for generating \oddpart ~is equivalent to the algorithm for generating the sequence of turns in the Heighway Dragon.

\subsection{An algorithm for generating the turns in the Heighway Dragon}
John Heighway and William Harter discovered what we now typically refer to as \textit{the} Dragon Curve while exploring the figures produced by repeatedly folding a strip of paper in half (always in the same direction), then opening each fold in the paper to 90 degrees and viewing the whole thing edge-on (see Knuth and Davis 2011 \cite{knuth}).

In the interest of mathematical purity, we will represent the strip of paper with a horizontal line connecting two vertexes: A and B. We will, furthermore, imagine a mathematical ant making repeated journeys along the figure from A to B. 

\begin{figure}[H]
\centering
\includegraphics{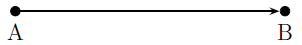}
\caption{A ``strip of paper'' to be folded into the Heighway Dragon}
\end{figure}

\noindent On its first journey, our ant will travel from left to right, starting at A and ending at B. Before its second journey, however, we will ``fold'' AB in half. That is, we will insert a vertex C into AB at its midpoint. 

\begin{figure}[H]
\centering
\includegraphics{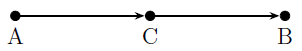}
\caption{Beginning of the first fold}
\end{figure}

\noindent We will then rotate CB counterclockwise around C, till B and A coincide.

\begin{figure}[H]
\centering
\includegraphics{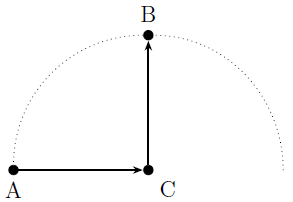}
\caption{First fold, at 90 degrees}
\end{figure}

\noindent After repeated foldings, we will open each fold to 90 degrees. If we then send the ant on another journey from A to B, it would turn 90 degrees left (counterclockwise) at C.

It is not yet time to unfold, however. So far, we have made one complete fold, and our ant would travel AC from left to right, then CB from right to left.

\begin{figure}[H]
\centering
\includegraphics{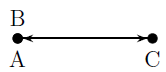}
\caption{First fold, completed}
\end{figure}

Next, we make our second fold. To start, we insert two new vertexes, D\textsubscript{1} and D\textsubscript{2}, at the midpoints of AC and CB respectively. (In the figures below, we will represent the ``paper'' as partially unfolded, so that we can see which vertex and edge is which.) 

\begin{figure}[H]
\centering
\includegraphics{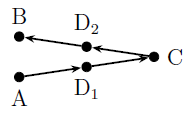}
\caption{Beginning of second fold (with non-overlapping vertexes for clarity)}
\end{figure}

\noindent Then, to complete the fold, we will rotate CD\textsubscript{2} counterclockwise around D\textsubscript{2}, and D\textsubscript{1}C counterclockwise around D\textsubscript{1}, till C coincides with A and B. 

\begin{figure}[H]
\centering
\includegraphics{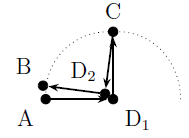}
\caption{Second fold, partially complete (with non-overlapping vertexes for clarity)}
\end{figure}

We should note that D\textsubscript{1} was inserted into an edge (AC) our ant would have traversed left to right \emd if the figure were in its fully-folded state \emd and D\textsubscript{2} was inserted into an edge (CB) that our ant would have traversed right to left. When we open all the folds to 90 degrees, our ant will turn 90 degrees to the left at D\textsubscript{1}, and 90 degrees to the right at D\textsubscript{2}. Nothing about its turn at C would change, however; it would still make a left turn there as well.

\begin{figure}[H]
\centering
\includegraphics{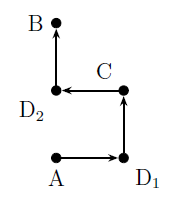}
\caption{After two folds, opened to 90 degrees}
\end{figure}

We learn a number of things from just these first two folds.

\begin{enumerate}
\item A vertex inserted into an edge that the ant would traverse from left to right (were the figure in its fully-folded state) becomes a left turn when all folds are opened to 90 degrees. Otherwise, it becomes a right turn.
\item Once a vertex has been inserted and a fold made at that vertex, future foldings may change the place of that vertex in the overall sequence of vertexes, but do not change the direction our ant will have to turn when it gets to that vertex. 
\item The direction our ant has to travel on its journey from A to B always reverses with each new edge (when the figure is in its fully-folded state).
\end{enumerate}

From these facts we can derive an algorithm for generating the sequence of turns in Heighway Dragon. We start with a sequence consisting of two 0s.

\begin{enumerate}
\item Between each pair of terms in the sequence, insert a positive integer. Specifically, if the first term in the pair was at an odd index at the start of this iteration, insert a 1. Otherwise, insert a 3.
\item Go to step 1, or step 3, as desired.
\item Remove the 0s at the start and end of the sequence. Then stop.
\end{enumerate}

For example:

\begin{table}[H]
\centering
\begin{tblr}{colspec={Q[l]}}
0, 0\\
{\color{gray}0,} 1, {\color{gray}0}\\
{\color{gray}0,} 1, {\color{gray}1,} 3, {\color{gray}0}\\
{\color{gray}0,} 1, {\color{gray}1,} 3, {\color{gray}1,} 1, {\color{gray}3,} 3, {\color{gray}0}\\
{\color{gray}0,} 1, {\color{gray}1,} 3, {\color{gray}1,} 1, {\color{gray}3,} 3, {\color{gray}1,} 1, {\color{gray}1,} 3, {\color{gray}3,} 1, {\color{gray}3,} 3, {\color{gray}0}\\
\end{tblr}
\caption{Four iterations in generating the sequence of turns in the Heighway Dragon}
\end{table}

So, after four iterations, our sequence is $\langle 1, 1, 3, 1, 1, 3, 3, 1, 1, 1, 3, 3, 1, 3, 3 \rangle$. 

\begin{figure}[H]
\centering
\includegraphics{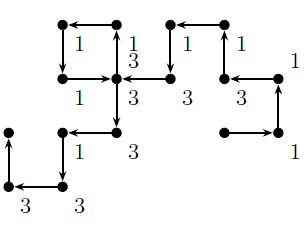}
\caption{The Heighway Dragon after four iterations}
\end{figure}

If we run a search for this sequence in the OEIS, we get sequence \seqnum{A099545}, entitled, ``Odd part of n, modulo 4.''

\begin{table}[H]
\centering
\begin{tblr}{colspec={Q[l]}}
1, 1, 3, 1, 1, 3, 3, 1, 1, 1, 3, 3, 1, 3, 3, 1, 1, 1, 3, 1, 1, 3, 3, 3, 1, 1, 3,\ldots
\end{tblr}
\caption{OEIS sequence A099545, the odd part of $n$, mod 4}
\end{table}

\noindent The ``odd part of $n$'' (\textit{not} mod $4$) can be found in sequence \seqnum{A000265}:

\begin{table}[H]
\centering
\begin{tblr}{colspec={Q[l]}}
1, 1, 3, 1, 5, 3, 7, 1, 9, 5, 11, 3, 13, 7, 15, 1, 17, 9, 19, 5, 21, 11, 23, 3, 25,\\ 
13, 27, 7, 29, 15, 31, 1, 33, 17, 35, 9, 37, 19, 39, 5, 41, 21, 43, 11, 45, \ldots
\end{tblr}
\caption{OEIS sequence A000265, the odd part of $n$}
\end{table}

\subsection{The fractal nature of \oddpart}

Whenever $n$ is odd, $2^{\text{\vpn{2}}}=1$, since $\text{\vpn{2}}=0$ and $2^0=1$. Now, the odd part of $n$ \emd i.e., $o(n)$ \emd is obtained by dividing $n$ by $2^{\text{\vpn{2}}}$, and when $n$ is odd, $2^{\text{\vpn{2}}}=1$. So, to obtain $o(n)$ when $n$ is odd, we just divide $n$ by $1$. But, this means that $o(n)$ just \textit{is} $n$ when $n$ is odd. This leads us to expect the numbers at odd indexes in \seqnum{A000265} to just be the odd positive integers, and this is what we find. 

Once we see this, we might then notice that the sequence is fractal, as Kerry Mitchell \cite{mitchell} points out in the OEIS entry for \seqnum{A000265}. While the terms at odd indexes just are the odd integers (in order, starting with 1), the terms at even indexes are the sequence itself. 

As usual, however, we would like a demonstration that the sequence is fractal, if such a demonstration can be obtained. To start, we will figure out how the decimation rule for \oddpart ~works, and obtain a formula for determining which terms at which indexes are to be ``removed,'' and which are to be ``retained.'' Next, we will then see how we can reverse the decimation process to construct an integer sequence. And third, we will show that the sequence so constructed is identical to \oddpart.

\subsubsection{How the decimation rule works}
Mitchell proposes that \oddpart ~is fractal, and that its decimation rule is to ignore its odd numbered elements, retaining only its even numbered elements. If this is correct, then we ought to be able to apply that same decimation rule to the sub-sequence we located via its first application, and then apply it again to sub-sequence we located via its second application, etc. Each time, furthermore, we should be ``removing'' (i.e., ignoring) the odd positive integers, in order, starting with 1. 

Consider, then, the indexes of the terms we will be ignoring with each application of the decimation rule. With the first application, it will be the terms at odd indexes \emd the terms at indexes of the form $o\times2^0$, where $o$ is an odd positive integer. That is, we will be ignoring the terms at indexes $1\times2^0=1, 3\times2^0=3, 5\times3^0=5$, etc. This will leave the terms at even indexes, which are the terms at indexes of the form $k\times2^1$, where $k$ is any positive integer. That is, we will be retaining the terms at indexes $1\times2^1=2, 2\times2^1=4, 3\times2^1=6$, etc.

Next, we will ignore every other term in the sub-sequence found at indexes of the form $k\times2^1$. That is, we will be ``removing'' the terms at odd multiples of $2^1$, and thus which have the form $o\times2^1$ (where $o$ is some odd positive integer). That is, we will be removing the entries in our original list at indexes $1\times2^1=2, 3\times2^1=6, 5\times2^1=10$, etc. This will leave only the even multiples of $2^1$, which will all have the form $2k\times2^1=k\times2^2$. That is, they will be the entries at indexes $1\times2^2=4, 2\times2^2=8, 3\times2^2=12$, etc.

In general, the $j$th sub-sequence we find in \seqnum{A000265} will be obtained by ignoring all terms at indexes of the form $o\times2^j$ (where $o$ is an odd positive integer) and focusing on all the terms at indexes of the form $k\times2^{j+1}$ (where $k$ is any positive integer) \emd assuming that $j$ starts at $0$. 

\subsubsection{How to reconstruct the sequence}
We can now use what we have just learned to figure out how to construct the sequence in which the entries at odd indexes are the odd positive integers (in order, starting with 1) and in which the terms at even indexes are just the sequence itself. Since we are supposed to be removing the odd integers with each application of the decimation rule, we ought to be able to iteratively reconstruct the entire sequence by placing the odd integers at indexes of the form $o\times2^j$, where $j$ is the number of the iteration (and starts at 0). That is, at the index $o\times2^j$, we ought to place the term $o$ (e.g., at index $1\times2^3$ we ought to place a $1$, at index $5\times2^4$ we ought to place a $5$, at index $13\times2^{10}$ we ought to place a $13$, and so on).

\begin{table}[H]
\centering
\begin{tblr}{colspec={Q[c]}}
Indexes:     & 1 & 2 & 3 & 4 & 5 & 6 & 7 & 8 & 9 & 10 & 11 & 12 & 13 & 14 & 15\\
Iteration 0: & \textbf{1} &   & \textbf{3} &   & \textbf{5} &   & \textbf{7} &   & \textbf{9} &    & \textbf{11} &     & \textbf{13} &    & \textbf{15}\\
Iteration 1: & 1 & \textbf{1} & 3 &   & 5 & \textbf{3} & 7 &   & 9 & \textbf{5}  & 11 &     & 13 & \textbf{7}  & 15\\
Iteration 2: & 1 & 1 & 3 & \textbf{1} & 5 & 3 & 7 &   & 9 & 5  & 11 & \textbf{3}  & 13 & 7  & 15\\
Iteration 3: & 1 & 1 & 3 & 1 & 5 & 3 & 7 & \textbf{1} & 9 & 5  & 11 & 3  & 13 & 7  & 15\\
\end{tblr}
\caption{Proposed construction of \oddpart}
\end{table}

This certainly looks like the \oddpart ~sequence. But is it? If we iteratively place the odd integers at indexes in a sequence that have the form $o\times2^j$ (where $o$ is an odd integer and $j$ is the number of the iteration, starting with 0), will we obtain a sequence in which the term at index $n$ is the largest odd divisor of $n$?

In the algorithm, we place each $o$ at every index $o\times2^j$. Now, the number at index $n$ is supposed to be the odd part of the integer $n$. So, is $o$ the odd part of $o\times2^j$?

To answer that question, we need to divide $o\times2^j$ by the largest power of 2 by which it is divisible. But $o$ is odd, and thus is not divisible by $2$. This means that $2^j$ just \textit{is} the largest power of 2 that evenly divides $n=o\times2^j$. So, when we divide $o\times2^j$ by $2^j$, we are left with the $o$. This means that $o$ is the odd part of $n$ for every $n$ at an index of the form $o\times2^j$. And that means that the term at index $o\times2^j$ ought to be $o$. 

Therefore, the sequence constructed by reversing the decimation rule above \textit{is} \oddpart. It follows from that, furthermore, that \oddpart is fractal.

But what does all this have to do with the Heighway Dragon?

\subsection{Connecting \oddpart ~to the Heighway Dragon}
Given what we have seen above, we suspect that the sequence of turns in the Heighway Dragon, when represented as numbers of counterclockwise 90-degree turns, is identical to \oddpart ~mod $4$. Furthermore, we have just demonstrated that \oddpart ~(\textit{not} mod $4$) is fractal. What we need to show now is that the sequence of turns in the Heighway Dragon (represented as numbers of counterclockwise 90-degree turns) actually \textit{is} \oddpart ~mod $4$.

We will demonstrate this by showing that the algorithm for generating \oddpart ~(when its outputs are taken mod $4$), mirrors the algorithm for generating the Heighway Dragon. All that is required is that we run the \oddpart ~algorithm ``backward." What would happen if we start with the $j$th iteration of the algorithm for generating \oddpart, then, and worked iteration by iteration back down to 1?

Starting with $j=3$, we would have:

\begin{table}[H]
\centering
\begin{tblr}{colspec={Q[c]}}
Indexes:     & 1 & 2 & 3 & 4 & 5 & 6 & 7 & 8 & 9 & 10 & 11 & 12 & 13 & 14 & 15\\
Iteration 3: &  &  &  &  &  &  &  & \textbf{1} &  &  &  &  &  &   & \\
Iteration 2: &  &  &  & \textbf{1} &  &  &  & 1 & &  &  & \textbf{3} & & & \\
Iteration 1: &  & \textbf{1} &  & 1 &  & \textbf{3} &  & 1  &  & \textbf{5}  &  &  3  &  & \textbf{7}  & \\
Iteration 0: & \textbf{1} &  1 & \textbf{3} & 1 & \textbf{5} &  3 & \textbf{7} & 1  & \textbf{9} &  5  & \textbf{11} & 3  & \textbf{13} &  7  & \textbf{15}\\
\end{tblr}
\caption{Construction of \oddpart, reversed}
\end{table}

Running the algorithm for generating the odd part of $n$ with $j$ decreasing instead of increasing looks suspiciously like the construction of the sequence of turns for the Heighway Dragon,  if we align each introduced turn with its position after some finite number of iterations:

\begin{table}[H]
\centering
\begin{tblr}{colspec={Q[c]}}
Index: & 0 & 1 & 2 & 3 & 4 & 5 & 6 & 7 & 8 & 9 & 10 & 11 & 12 & 13 & 14 & 15 & 16\\
Start: & 0 &&&&&&&&&&&&&&&& 0\\
Iteration 0: & 0 &&&&&&&& \textbf{1} &&&&&&&& 0\\
Iteration 1: & 0 &&&& \textbf{1} &&&& 1 &&&& \textbf{3} &&&& 0\\
Iteration 2: & 0 && \textbf{1} && 1 && \textbf{3} && 1 && \textbf{1} && 3 && \textbf{3} && 0\\
Iteration 3: & 0 & \textbf{1} & 1 & \textbf{3} & 1 & \textbf{1} & 3 & \textbf{3} & 1 & \textbf{1} & 1 & \textbf{3} & 3 & \textbf{1} & 3 & \textbf{3} & 0\\
\end{tblr}
\caption{Four iterations in generating the sequence of turns in the Heighway Dragon}
\end{table}

For each $1$ or $3$ in the odd part of $n$ table, there is a corresponding $1$ or $3$ in the Heighway Dragon table. If we take the numbers larger than $3$ (in the odd part of $n$ table) mod $4$, we find the same number in the same position in the Heighway Dragon table.

Once again, however, we should look for a better demonstration than is provided by a visual scan. Each iteration of the algorithm for generating the Heighway Dragon's turns amounts to inserting alternating $1$s and $3$s between any numbers already present in the sequence. Each iteration of the algorithm for generating \oddpart, run in reverse, involves inserting the odd numbers starting with $1$ into every gap between entries in the sequence produced by the previous iteration. If we take the odd positive integers mod $4$, however, they become a sequence of alternating $1$s and $3$s. 

To quickly see that this is the case, consider that the $k$th even number is $2k$, and thus the $k$th odd number is $2k-1$ (where $k$ is a positive integer). Now, every second even number \emd starting with 4 \emd is a multiple of $4$ and thus is evenly divisible by $4$. This means that every second odd number \emd starting with 3 \emd will be one less than a multiple of $4$. Any number that is one less than a multiple of $4$, taken mod $4$, will be $3$. So, every second odd positive integer, mod $4$, will be $3$.

All the other odd positive integers \emd starting with $1$ \emd taken mod $4$ will be $1$. After all, every second odd number (starting with $1$) will be $2$ less than the next odd number. But the next odd number will be $1$ less than a multiple of $4$, as just discussed. Therefore, every second odd number \emd starting with $1$ \emd will the $3$ less than a multiple of $4$. But any number that is $3$ less than a multiple of $4$, taken mod $4$, is $1$.

So, the sequence of odd numbers mod $4$ will be a sequence of alternating $1$s and $3$s. This means that by inserting the sequence of odd numbers into the sequence created by the previous iteration, the algorithm for generating \oddpart ~is inserting a sequence that is equivalent, mod $4$, to a sequence of alternating 1s and 3s. 

Furthermore, we are interpreting the numbers generated by both algorithms as quantities of $90$-degree turns to make at given points along a path. But making $4$ such turns is equivalent to making none. (If you turn left $90$-degrees $4$ times, you will be facing in the same direction you were before you turned.) This means that the result of making $k$ $90$-degree turns in a single direction is equivalent to making $k$ mod $4$ of those same turns. 

Therefore, whether we take \oddpart ~in its original form, or take it mod $4$, the same figure will be produced when its terms are interpreted as $90$-degree turns in a single direction. And that figure will be the Heighway Dragon.

\subsection{Conclusion}
The \levy Dragon is produced by \svpn{2} \emd the sequence of exponents of 2 in the prime factorizations of the positive integers. The Heighway Dragon is produced by \oddpart ~\emd the odd part of $n$ sequence, which is obtained by dividing the positive integers by $2^{\text{\vpn{2}}}$. There is a sense, then, in which every positive integer $n$ just is $2^{\text{\vpn{2}}}\times o(n)$. That is, it takes two fractal dragons to produce the sequence of positive integers.

\bibliographystyle{jis}
\bibliography{primefactors}

\begin{thebibliography}{10}

\bibitem{alster}
E.~Alster, The finite number of interior component shapes of the levy dragon,
  {\em Discrete Comput. Geom.} {\bf 43} (2010), 855--875.

\bibitem{cloitre}
B.~Cloitre, [comment on \seqnum{A007814}], \url{https://oeis.org/A007814},
  2003-03-06.
\newblock Accessed: 2024-08-01.

\bibitem{faye}
I.~Faye, On integer sequences related to a memorization technique, {\em Journal
  of Applied Sciences} {\bf 11} (2011), 1473--1475.

\bibitem{fuster}
A.~F\'{u}ster-Sabater and S.~D. Cardell, Linear complexity of generalized
  sequences by comparison of pn-sequences, {\em RACSAM} {\bf 114} (2020), [18
  pages].
\newblock Article 79. Accessed: 2024-08-01. Available from:
  \url{https://doi.org/10.1007/s13398-020-00807-5}.

\bibitem{gilleland}
M.~Gilleland, Some self-similar integer sequences,
  \url{https://oeis.org/selfsimilar.html}, n/d.
\newblock Accessed: 2024-08-01.

\bibitem{kimberling95}
C.~Kimberling, Numeration systems and fractal sequences, {\em Acta Arith.} {\bf
  73} (1995), 103--117.

\bibitem{kimberling97}
C.~Kimberling, Fractal sequences and interspersions, {\em Ars Combin.} {\bf 45}
  (1997), 157--168.

\bibitem{kimberling22}
C.~Kimberling, Self-containing sequences, fractal sequences, selection
  functions, and parasequences, {\em J. of Integer Seq.} {\bf 25}(2) (2022),
  [17 pages].
\newblock Article 22.2.1. Accessed: 2024-08-01. Available from:
  \url{https://cs.uwaterloo.ca/journals/JIS/VOL25/Kimberling/kimber16.pdf}.

\bibitem{levy}
P.~L\'{e}vy, Plane or space curves and surfaces consisting of parts similar to
  the whole.
\newblock In G.~A. Edgar, editor, {\em Classics on fractals}, pp.  181--239.
  Westview Press, Boulder, CO, 2010.

\bibitem{mandelbrot}
B.~Mandelbrot, {\em The fractal geometry of nature}, W. H. Freeman, San
  Francisco, CA, 1982.

\bibitem{mitchell}
K.~Mitchell, [comment on a000265], \url{https://oeis.org/A000265}, 2005-12-07.
\newblock Accessed: 2025-02-06.

\bibitem{oeis}
{\relax OEIS Foundation Inc}, On-line encyclopedia of integer sequences,
  \url{https://oeis.org}, 2024.
\newblock Accessed: 2024-08-01.

\bibitem{riddle}
L.~Riddle, \levy dragon,
  \url{https://larryriddle.agnesscott.org/ifs/levy/levy.htm}, 2024-06-20.
\newblock Accessed: 2024-08-02.

\bibitem{stewart}
I.~Stewart, The nature of fractal geometry.
\newblock In N.~Lesmoir-Gordon, editor, {\em The Colours of Infinity: The
  Beauty and Power of Fractals}, pp.  2--23. Springer-Verlag, London, 2010.

\bibitem{taylor}
F.~J. Taylor, Signal processing, digital.
\newblock In R.~A. Meyers, editor, {\em Encyclopedia of telecommunications},
  pp.  737--760. Academic Press, San Diego, CA, 1989.

\bibitem{knuth}
D.~Knuth [with C.~Davis], Number representations and dragon curves.
\newblock In {\em Selected papers on fun and games}, pp.  571--614. Center for
  the Study of Language and Information, Stanford, CA, 2011.

\end{thebibliography}
\end{document}